\newtheorem{thm}{Theorem}[section]
 \newtheorem{cor}[thm]{Corollary}
 \newtheorem{lem}[thm]{Lemma}
 \newtheorem{prop}[thm]{Proposition}
 \theoremstyle{definition}
 \newtheorem{defn}[thm]{Definition}
 \theoremstyle{remark}
 \newtheorem{rem}[thm]{Remark}
 \numberwithin{equation}{section}
\DeclareMathOperator{\supp}{supp}
\DeclareMathOperator{\dist}{dist}
\newcommand{\R}{\mathbb{R}}
\newcommand{\N}{\mathbb{N}}
\renewcommand{\d}{ \ \mathrm{d}}
\title[]{Zero-extension convergence and Sobolev spaces on changing domains}
\author{Nikita Evseev}
	\address{Analysis on Metric Spaces Unit, Okinawa Institute of Science and Technology Graduate University, 1919-1 Tancha, Onna-son, Okinawa 904-0495, Japan
	}
	\email{nikita2.evseev@gmail.com}
\author{Malte Kampschulte}
  \address{Department of Mathematical Analysis, Faculty of Mathematics and Physics, Charles University,
Prague, Czech Republic
  }
  \email{kampschulte@karlin.mff.cuni.cz}
\author{Alexander Menovschikov}
 \address{Department of Mathematics, HSE University, Moscow, Russia
 }
 \email{menovschikovmath@gmail.com}
\date{\today}
\begin{document}

\maketitle

\begin{abstract}
 We extend the definition of weak and strong convergence to sequences of Sobolev-functions whose underlying domains themselves are converging. In contrast to previous works, we do so without ever assuming any sort of reference configuration. We then develop the respective theory and counterparts to classical compactness theorems from the fixed domain case. 
Finally, we illustrate the usefulness of these definitions with some examples from applications and compare them to other approaches.
\end{abstract}


\section{Introduction}

In the modern study of partial differential equations and calculus of variations, one is often faced with problems where finding the domain itself is part of the solution. This includes questions of domain optimization, where one asks about the optimal domain on which to solve a problem \cite{bucur2005variational}, but in particular also problems such as fluid structure interaction (see e.g. \cite{muhaExistenceWeakSolution2013,lengelerWeakSolutionsIncompressible2014,benesovaVariationalApproachHyperbolic2020}), where the evolution of one part of the problem (e.g.\ the evolution of a deformable solid) determines a domain on which another part of the problem is defined (e.g.\ a fluid that is displaced by the solid), which then in turn couples back into the first one via boundary conditions.

When one studies this kind of problems, one is then often dealing with sequences of approximative solutions and their convergence. For this, the first step is to consider the convergence of domains. This, in itself is already well studied. But then given a converging sequence of domains $\Omega_i\subset \R^n$ and a limit $\Omega_0 \subset \R^n$, one then has a matching sequence of functions $u_i: \Omega_i \to \R$, usually in some problem-appropriate space, and needs to identify a correct limit function $u_0:\Omega_0 \to \R$.

As this is a common issue in many applications, there have been many proposed solutions. These solutions commonly revolve around defining a common reference configuration $\Omega_r$ and maps $\Phi_i: \Omega_r \to \Omega_i$ which make it possible to pull back the problem to that of convergence of functions $u_i \circ \Phi_i:\Omega_r \to \R$ on the single reference domain. 

Whenever there is some sort of natural reference configuration, this is clearly the right approach. But in many cases, this reference is somewhat arbitrary (indeed, in continuum mechanics these are often called ``arbitrary Lagrangian-Eulerian'' (ALE) approaches, see e.g.\ \cite{desjardins2001weak,muha2013nonlinear,alphonseAbstractFrameworkParabolic2015}) and in more complicated situations can lead to artificial difficulties stemming mainly from bad choices of the maps $\Phi_i$, which can relate points of the different $\Omega_i$ that should not have anything in common and which generally preclude changes in topology.

In this paper we thus propose an alternative approach, which instead builds directly on the assumption that the only thing that the different domains $\Omega_i$ have in common is that they are subsets of the same $\R^n$. As any notion of strong convergence should start with that of a distance, we thus need a way of comparing two functions directly, in a way that on the common parts of their domains $\Omega_i \cap \Omega_j$ behaves like the usual distance in $W^{1,p}$, while at the same time taking into account those parts where their domain differs.

An obvious approach to this then is using extension theorems to compare extensions on $\Omega_i \cup \Omega_j$ instead. However, while extension theorems from $W^{1,p}(\Omega_i)$ to $W^{1,p}(\R^n)$ exist and are well studied, using them results in its own issues, as they are strongly dependent on regularity of the boundaries $\partial \Omega_i$ and often arbitrary choices are involved in constructing an extension.

However, it turns out that all those difficulties can be circumvented by considering the trivial extension of any function by zero. This is clearly unique and intrinsically ``stable'' under changing domains. It has of course the downside that such an extension of a Sobolev-function is no longer a Sobolev function. Yet, in order to define a distance, this is actually not needed, as instead of the derivative of the extension, it is enough to consider the extension of the derivative. All this then results in our definition of ``zero-extension'' convergence. In addition to the resulting strong convergence, we are also able to define a matching weak convergence using a duality approach.

Now, with such a definition in hand, one can then consider the question of its usefulness in practice. We claim that this is indeed the case. In fact, we can prove analogous statements to the usual theorems about Sobolev-spaces on fixed domains, including standard compactness results such as Banach-Alaoglu and Rellich-Kondrachov type theorems. This allows problems on changing domains to be handled in almost precisely the same way to those on fixed domains, which should prove a great help for applications, as we will illustrate.

The structure of this article is thus as follows. In Section 2 we will introduce our basic definitions and some of the resulting properties for a static domain. Section 3 will be devoted to proving analogues of the common compactness theorems required for applications. In Section 4 we will quickly sketch some ideas on how to deal with boundary values (which in more detail will be the study of a future paper), before we apply all this to some examples in Section 5. Finally in Section 6, we will compare our definitions to other common approaches to this kind of problem.

\subsection*{Acknowledgments}
M.K.\ is supported by the ERC-CZ Grant CONTACT LL2105 funded
by the Ministry of Education, Youth and Sport of the Czech Republic as well as by the Czech Science Foundation (GA\v{C}R) under grant No.\ 23-04766S.

The authors are grateful to Professor Alexander Ukhlov for fruitful discussions and valuable comments.

\section{Strong convergence on a sequence of domains}

Our general setup will be that of a sequence of Banach spaces $(X_i)_{i \in \N}$ and on that sequence we will consider a sequence of elements $(u_i)_{i \in \N}$, $u_i \in X_i$ for all $i \in \N$. For such sequences we can introduce an abstract definition of convergence.

\begin{defn}\label{definition:abstractconv}
     Let $(X_i)_{i \in \N}$ be a sequence of Banach spaces. Consider a sequence $(u_i)_{i \in \N}$ such that $u_i \in X_i$ for all $i \in \N$. 
     We say that $(u_i)_{i \in \N}$ converges to a \textit{limit} $u$ that belongs to a \textit{limit Banach space} $X$ in a (strong) abstract sense if there exists a relation (convergence) $(u_i)_{i \in \N} \mapsto u$, such that
     \begin{enumerate}
         \item $\|u_i\|_{X_i} \to \|u\|_{X} \text{ as } i \to \infty$, \\
         \item the element $u\in X$ is unique, \\
         \item any subsequence of $(u_{i_k})_{k \in \N}$ converges to the same limit $u$.
     \end{enumerate}
\end{defn}

Of course, such a definition is too general, and we introduce it just to emphasize the general settings of our research:
a sequence of Banach spaces, a sequence of vectors and the notion of convergence.
In fact, here we are talking about the convergence of a sequence of pairs $(u_i, X_i)$ to some pair $(u,X)$, i.e. taking a limit both with respect to functions and with respect to spaces.
The idea behind these three properties of the introduced abstract notion is to provide such a definition of convergence that, from the one side, can be useful for constructing the corresponding topological space and, from the other, be applicable to exact applications.    

Now we move on to considering more precise constructions. Thorough the paper we will consider a sequence of domains $\Omega_i \subset \R^n$ that converges to $\Omega \subset \R^n$ in the \textit{Hausdorff-sense}.
Recall that the Hausdorff distance is defined as
\[
d_{H}(\Omega_1, \Omega_2) := \max\{\sup\limits_{x\in\Omega_1} d(x, \Omega_2), \sup\limits_{y \in \Omega_2} d(\Omega_1, y)  \}, 
\]
then $\Omega_i$ converges to $\Omega$ in the Hausdorff-sense if
$$
d_{H}(\Omega_i, \Omega) \to 0 \quad \text{ as } i \to \infty.
$$
We will use the notation of this convergence for sequences of domains unless otherwise stated. 
Although most of the propositions of this article also hold for more general notions of convergence (e.g., \ convergence a.e.\ or in measure), we restrict ourselves for simplicity. Moreover, Hausdorff convergence is a suitable notion of convergence for the applications that we dealing with.

\subsection{Zero-Extension convergence}

As mentioned, our main object is a sequence of Sobolev spaces, and we want to focus on those definitions that keep common parts invariant, 
i.e.\ if one considers a function $u$ supported in $U \subset \bigcup_{i>i_0} \Omega_i \cap \Omega$, then this convergence of domains should be in some sense invisible and the distance between $u$ considered as a function on $\Omega_i$ and as a function on $\Omega$ should be zero. The way to do this is by using extensions. While extending a function $u_i \in W^{1,p}(\Omega_i)$ to a function $\hat{u}_i \in W^{1,p}(\R^n)$ is usually possible, the way this extension looks like depends strongly on even minor changes of $\Omega_i$. 
Instead, it turns out there is a much simpler way that, even though it does not result in a differentiable function on $\R^n$, still suffices for all applications.

For any measurable set $\Omega$ and any function $u \in L^p(\Omega)$, we can define its \textit{zero-extension} $\tilde{u}$ as
\begin{align*}
 \tilde{u}(x) = \begin{cases} u(x) &\text{ if } x\in \Omega\\ 0 & \text{ otherwise.} \end{cases}
\end{align*}
In particular we have $\widetilde{u} \in L^p(\R^n)$ with $\|\tilde{u}\|_{L^p(\R^n)} = \|u\|_{L^p(\Omega)}$. Though technically this operator (denoted also as $(.)^\sim$)\footnote{We use a similar notation convention as is usual for the Fourier-transform, i.e.\ as a diacritic $\tilde{u}$ when applied to individual symbols and as a superscript $(.)^\sim$ for longer expressions.} depends on $\Omega$, we will generally not denote this dependence as it will be clear from the domain of definition of the respective function.

\begin{defn}[Zero-extension convergence]
 Let $(\Omega_i)_{i \in \N}$ be a sequence of domains that converges to a domain $\Omega$ in the Hausdorff-sense and let $u_i \in W^{k,p}(\Omega_i)$ for each $i\in\N$, $u \in W^{k,p}(\Omega)$. We say that $u_i \to u$ in the $W^{k,p}$-\emph{zero-extension sense} ($u_i \xrightarrow{ZE} u$) if
 \begin{align*}
 \sum_{|\alpha|\leq k}\| (D^\alpha u_i)^{\sim} - (D^\alpha u)^\sim\|_{L^p(\R^n)} \to 0 \text{ as } i\to\infty.
 \end{align*}
\end{defn}

This convergence is in consistence with an abstract convergence (Definition \ref{definition:abstractconv}). 
The first and the third properties of abstract convergence can be easily directly verified. Let us show the uniqueness of the limit, which is a direct consequence of the following:

\begin{prop}[Uniqueness]\label{lem:supp}
Let $(\Omega_i)_{i \in \N}$ be a sequence of domains that converges to a domain $\Omega$ in the Hausdorff-sense. 
Let $u_i \in L^{p}(\Omega_i)$, $v \in L^{p}(\R^n)$ and
 \begin{align}
   \| \tilde u_i - v \|_{L^p(\R^n)} \to 0 \text{ for } i \to \infty.
 \end{align}
Then there exists a unique $u \in L^{p}(\Omega)$ whose zero-extension is given by $v$.
\end{prop}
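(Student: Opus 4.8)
The plan is to show that the limit $v$ must vanish almost everywhere outside $\Omega$; once this is established, setting $u := v|_\Omega$ immediately produces the desired function, and uniqueness is automatic. The only geometric input needed is the one-sided part of Hausdorff convergence, namely that $\sup_{x\in\Omega_i} d(x,\Omega) \to 0$, which says that $\Omega_i$ eventually lies inside arbitrarily small neighborhoods of $\overline{\Omega}$.

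First I would fix a compact set $K \subset \R^n \setminus \overline{\Omega}$ and set $\delta := d(K,\overline{\Omega}) > 0$. By the observation above, for $i$ large enough one has $\Omega_i \subset \{x : d(x,\Omega) < \delta/2\}$, which is disjoint from $K$; hence $\tilde u_i \equiv 0$ on $K$. Since $\|\tilde u_i - v\|_{L^p(K)} \le \|\tilde u_i - v\|_{L^p(\R^n)} \to 0$ while $\tilde u_i|_K = 0$, it follows that $\|v\|_{L^p(K)} = 0$, i.e.\ $v = 0$ almost everywhere on $K$.

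Next, exhausting the open set $\R^n \setminus \overline{\Omega}$ by such compacta $K_m \nearrow \R^n\setminus\overline{\Omega}$ yields $v = 0$ a.e.\ on $\R^n\setminus\overline{\Omega}$. Using that $|\partial\Omega| = 0$ for the limit domains considered here, this upgrades to $v = 0$ a.e.\ on $\R^n \setminus \Omega$. I can then define $u := v|_\Omega \in L^p(\Omega)$: its zero-extension agrees with $v$ on $\Omega$ and equals $0 = v$ a.e.\ on the complement, so $\tilde u = v$. Uniqueness follows at once, since any $u' \in L^p(\Omega)$ with $\tilde u' = v$ satisfies $u' = v|_\Omega = u$ a.e.\ on $\Omega$.

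The analytic content here — localizing through the restricted $L^p$-norm — is routine; the genuine subtlety lies at the boundary. The argument only controls $v$ on the \emph{open} exterior $\R^n\setminus\overline{\Omega}$, so one must rule out concentration of $v$ on $\partial\Omega$, which is precisely why the hypothesis $|\partial\Omega|=0$ (or some equivalent regularity of the limit domain) is indispensable: without it a function $u \in L^p(\Omega)$ with $\tilde u = v$ need not exist. I would also note that only the containment direction of Hausdorff convergence enters this proof; the reverse direction, describing how $\Omega$ is approximated from within by the $\Omega_i$, plays no role for this particular statement.
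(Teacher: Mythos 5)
Your proof is correct and follows essentially the same route as the paper's: both exploit the one-sided Hausdorff containment $\Omega_i\subset\Omega^\varepsilon$ (for $i$ large) to force $\tilde u_i$, and hence the limit $v$, to vanish outside every neighborhood of $\overline{\Omega}$, and then define $u:=v|_{\Omega}$. The one substantive difference is that you explicitly invoke $|\partial\Omega|=0$ to pass from ``$v=0$ a.e.\ off $\overline{\Omega}$'' to ``$v=0$ a.e.\ off $\Omega$''; this step is genuinely needed --- for a domain with $|\partial\Omega|>0$ one may take $\Omega_i$ to be the open $\tfrac1i$-neighborhood of $\Omega$ and $u_i:=\chi_{\partial\Omega}|_{\Omega_i}$, so that $v=\chi_{\partial\Omega}$ is the zero-extension of no function in $L^p(\Omega)$ --- and the paper's own proof silently elides it when it jumps from $\supp v\subset\overline{\Omega}$ to the conclusion, so your added hypothesis is a correct observation about a gap in the statement rather than a flaw in your argument.
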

\begin{proof}
Let $\Omega^\varepsilon$ be a  $\varepsilon$-neighborhood of $\Omega$. 
Then for all sufficiently large $i$, by the Hausdorff-convergence, we have $\Omega_i \subset \Omega^\varepsilon$ and thus 
$\supp u_i \subset \overline{\Omega^\varepsilon}$. But then $\supp v \subset \overline{\Omega^\varepsilon}$ for all $\varepsilon > 0$, which implies $\supp v \subset \overline{\Omega}$. 
This means that $v|_{\Omega}$ is the (unique) function in $L^p(\Omega)$ whose zero-extension is given by $v$. We can thus safely define $u := v|_{\Omega}$.
\end{proof}

The following proposition establishes a Cauchy criterion for zero-extension convergence.

\begin{prop}[Cauchy criterion]\label{thm:CauchyZE}
 Let $(\Omega_i)_{i \in \N}$ be a sequence of domains that converges to a domain $\Omega$ in the Hausdorff-sense and let $u_i \in W^{k,p}(\Omega_i)$ for each $i\in\N$. Assume
 \begin{align} \label{eq:extCauchy}
   \sum_{|\alpha\leq k|}  \| (D^\alpha u_i)^\sim - (D^\alpha u_j)^\sim\|_{L^p(\R^n)} \to 0 \text{ for } i,j \to \infty.
 \end{align}
 Then there exists a unique $u \in W^{k,p}(\Omega)$ 
 such that $u_i \to u$ in the zero-extension sense. 
 Conversely, for any $u \in W^{k,p}(\Omega)$ such that $u_i \to u$ in the zero-extension sense, we have \eqref{eq:extCauchy}.
\end{prop}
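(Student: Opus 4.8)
The plan is to reduce everything to completeness of $L^p(\R^n)$ and then to recover the Sobolev structure of the limit via Proposition \ref{lem:supp}. First I would observe that for each fixed multi-index $\alpha$ with $|\alpha|\le k$, hypothesis \eqref{eq:extCauchy} forces the sequence $\big((D^\alpha u_i)^\sim\big)_{i\in\N}$ to be Cauchy in $L^p(\R^n)$ (the single summand is dominated by the whole sum). Since $L^p(\R^n)$ is complete, there is a limit $v_\alpha\in L^p(\R^n)$ with $\|(D^\alpha u_i)^\sim - v_\alpha\|_{L^p(\R^n)}\to 0$. Applying Proposition \ref{lem:supp} to each of these sequences yields $\supp v_\alpha\subset\overline\Omega$ and a unique $w_\alpha\in L^p(\Omega)$ with $\widetilde{w_\alpha}=v_\alpha$, so that $w_\alpha = v_\alpha|_\Omega$. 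I then set $u:=w_0$ and claim that $u\in W^{k,p}(\Omega)$ with $D^\alpha u = w_\alpha$; this is exactly the assertion $u_i\xrightarrow{ZE}u$, since then $(D^\alpha u)^\sim = v_\alpha$.

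The core of the argument is to verify that $w_\alpha$ is the weak $\alpha$-th derivative of $u$ on $\Omega$, i.e.\ that
\begin{align*}
\int_\Omega u\,D^\alpha\phi\d x = (-1)^{|\alpha|}\int_\Omega w_\alpha\,\phi\d x\qquad\text{for all }\phi\in C_c^\infty(\Omega).
\end{align*}
Fixing such a $\phi$ and writing all integrals over $\R^n$ (legitimate because $\supp v_\alpha\subset\overline\Omega$ while $\supp\phi\subset\Omega$), the strong $L^p(\R^n)$-convergences let me replace $v_0,v_\alpha$ by $\tilde u_i,(D^\alpha u_i)^\sim$ up to errors tending to $0$ (pairing $L^p$ with the fixed $L^{p'}$-functions $D^\alpha\phi,\phi$). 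Since $\phi$ vanishes on $\Omega_i\setminus\Omega$, both integrals may in turn be rewritten over $\Omega_i$, so the identity reduces to showing that the integration-by-parts defect
\begin{align*}
R_i:=\int_{\Omega_i} u_i\,D^\alpha\phi\d x - (-1)^{|\alpha|}\int_{\Omega_i} D^\alpha u_i\,\phi\d x
\end{align*}
tends to $0$ as $i\to\infty$.

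Here lies the main obstacle. If one had $\supp\phi\subset\Omega_i$ for all large $i$, then $R_i=0$ identically by the defining property of the weak derivatives of $u_i\in W^{k,p}(\Omega_i)$, and the proof would be finished. However, Hausdorff convergence does \emph{not} guarantee that compact subsets of $\Omega$ are eventually contained in $\Omega_i$ (think of $\Omega$ a ball and $\Omega_i$ the same ball with a shrinking hole removed), so in general $R_i$ is a genuine boundary contribution concentrated on $\partial\Omega_i\cap\supp\phi$. This is precisely where the strength of hypothesis \eqref{eq:extCauchy} must be used: I expect to argue that the defect distributions $\mu_i^\alpha := D^\alpha\tilde u_i - (D^\alpha u_i)^\sim$, which are supported on $\partial\Omega_i$, converge in $\mathcal{D}'(\R^n)$ to the fixed distribution $D^\alpha v_0 - v_\alpha$ whose support lies in $\overline\Omega$, and that the strong convergence forces this limiting defect to live on $\partial\Omega$ only, hence to vanish on the open set $\Omega$. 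Concretely, I plan to exploit the two consequences of the $L^p$-convergence that $\|w_\alpha\|_{L^p(\Omega\setminus\Omega_i)}\to 0$ and $\|D^\alpha u_i\|_{L^p(\Omega_i\setminus\Omega)}\to 0$, which encode that no mass of the limiting derivative is lost across the moving boundary and which rule out pathologies such as finely perforated domains; this is the technical heart and the step I expect to require the most care.

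Finally, the remaining assertions are routine. Uniqueness of $u$ is immediate from Proposition \ref{lem:supp}: any limit must have zero-extension equal to $v_0$, and there is only one such function in $L^p(\Omega)$. For the converse, if $u_i\xrightarrow{ZE}u$ for some $u\in W^{k,p}(\Omega)$, then applying the triangle inequality in $L^p(\R^n)$ to each summand,
\begin{align*}
\|(D^\alpha u_i)^\sim - (D^\alpha u_j)^\sim\|_{L^p(\R^n)}\le \|(D^\alpha u_i)^\sim - (D^\alpha u)^\sim\|_{L^p(\R^n)} + \|(D^\alpha u)^\sim - (D^\alpha u_j)^\sim\|_{L^p(\R^n)},
\end{align*}
and summing over $|\alpha|\le k$ yields \eqref{eq:extCauchy} directly.
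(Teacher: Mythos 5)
Your opening and closing steps coincide with the paper's proof: completeness of $L^p(\R^n)$ produces the limits $v_\alpha$ of $(D^\alpha u_i)^\sim$, Proposition \ref{lem:supp} localizes them to $\Omega$ and gives uniqueness, $u:=v_0|_\Omega$, and the converse is the triangle inequality. The divergence is at the verification that $v_\alpha|_\Omega=D^\alpha u$, and there your proposal has a genuine gap. The paper disposes of this step in one line: for $\varphi\in C_0^\infty(\R^n)$ with $\supp\varphi\subset\Omega$ one has $\supp\varphi\subset\Omega_i$ for all large $i$, so the integration-by-parts identity holds exactly on $\Omega_i$ (your $R_i$ vanishes identically) and passes to the limit by pairing the strongly convergent $(D^\alpha u_i)^\sim$ and $\tilde u_i$ against the fixed functions $\varphi$ and $D^\alpha\varphi$. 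You reject this containment on the grounds of the ball-with-a-shrinking-hole example; that is a fair reading of the literal formula for $d_H$ printed in the paper (which compares the sets, not their complements), but it is not the reading the paper uses: Proposition \ref{prop:alpha+-} asserts $\Omega_{\alpha-}\subset\Omega_i$ for large $i$, and the subsection on cases without compactness explicitly declares $\Omega_i=(0,1)\cup(1+\tfrac{1}{i},2)\to(0,2)$ to \emph{fail} Hausdorff convergence because the middle interval of the complement is far from the complement of the limit. Under the intended notion, compact subsets of $\Omega$ are eventually contained in $\Omega_i$, and the difficulty you are trying to circumvent does not arise.

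More importantly, the replacement argument you sketch is not a proof and cannot be completed in the generality you set up. You leave the central step as a plan (``I expect to argue\dots'', ``I plan to exploit\dots''), and the two quantitative consequences you propose to use are too weak: in the example $\Omega_i=(0,1)\cup(1+\tfrac{1}{i},2)$, which does converge under the literal definition of $d_H$, take $u_i=0$ on $(0,1)$ and $u_i=1$ on $(1+\tfrac{1}{i},2)$. Then \eqref{eq:extCauchy} holds with $v_0=\chi_{(1,2)}$ and $v_1=0$, both $\|v_1\|_{L^p(\Omega\setminus\Omega_i)}\to0$ and $\|u_i'\|_{L^p(\Omega_i\setminus\Omega)}\to0$ hold trivially, and yet the limiting defect (the distributional derivative of $v_0$ minus $v_1$) is a Dirac mass at the interior point $1$ of $\Omega$, so $v_0|_\Omega\notin W^{1,p}(\Omega)$ and the conclusion fails. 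Hence no argument from \eqref{eq:extCauchy} alone can force the defect to concentrate on $\partial\Omega$; the eventual-containment property you set aside is exactly the hypothesis that must be used. The fix is to invoke Proposition \ref{prop:alpha+-} (or prove the containment from the intended complementary Hausdorff convergence), after which the step closes immediately as in the paper.
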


\begin{proof}
 Assume that \eqref{eq:extCauchy} holds. 
 Then for each multi-index $\alpha$,  $|\alpha|\leq k$, the sequence $v_i^\alpha := (D^\alpha u_i)^\sim$ is a Cauchy sequence in $L^p(\R^n)$ and thus has a unique limit $v^\alpha \in L^p(\R^n)$. 
By Proposition \ref{lem:supp},  $v_i^\alpha |_\Omega$  is the (unique) function in $L^q(\Omega)$ whose zero-extension is given by $v^\alpha$.
We can thus safely define $u := v^0|_{\Omega}$.
 
 What is left to show is that $v^\alpha |_{\Omega} = D^\alpha u$. 
 For this fix $\varphi \in C_0^\infty(\R^n)$ with $\supp \varphi \subset \Omega$. Then for all $i$ large enough, we have $\supp \varphi \subset \Omega_i$. With this, on the one hand
 \begin{align*}
  (-1)^{|\alpha|} \int_{\Omega_i} u_i D^\alpha  \varphi \,\d x = \int_{\Omega_i} D^\alpha u_i \varphi \,\d x = \int_{\R^n} v^\alpha_i \varphi \, \d x \to \int_{\R^n} v^\alpha \varphi \, dx 
  =\int_{\Omega} v^\alpha |_{\Omega} \varphi \, \d x
 \end{align*}
 and on the other hand
 \begin{align*}
   \int_{\Omega_i} u_i D^\alpha \varphi \, \d x = \int_{\R^n} v^0_i D^\alpha \varphi \, \d x \to  \int_{\R^n} v^0 D^\alpha \varphi \, \d x = \int_{\Omega} u D^\alpha \varphi \, \d x
 \end{align*}
 which implies that $v^\alpha |_{\Omega} \in L^p(\Omega)$ is indeed the weak derivative of $u$.
 
 The converse is a direct consequence of the definition.
\end{proof}

The following lemma shows that zero-extension convergence supports the basic properties of Sobolev space like approximation by smooth functions.

\begin{lem} \label{lem:smoothApprox}
 Let $(\Omega_i)_{i \in \N}$ be a sequence of domains that converges to a domain $\Omega$ in the Hausdorff-sense and let $u_i \in W^{k,p}(\Omega_i)$ for each $i\in\N$, $u \in W^{k,p}(\Omega)$. Assume that $u_i$ converges to $u$ in the zero-extension sense. Then there exists a sequence $v_i \in C^\infty(\Omega_i)$ such that $\|v_i-u_i\|_{W^{1,p}(\Omega_i)} \to 0$ and $v_i \to u$ in the zero-extension sense.
\end{lem}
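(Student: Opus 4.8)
The plan is to reduce the statement to the classical Meyers--Serrin theorem applied on each individual domain $\Omega_i$, and then to transfer the zero-extension convergence from $u_i$ to the smooth approximants $v_i$ by a triangle inequality. The crucial observation that makes this transfer automatic is that for two functions living on the \emph{same} domain $\Omega_i$ the zero-extension of their difference is the difference of their zero-extensions, and that (as recorded above) the zero-extension is an isometry from $L^p(\Omega_i)$ into $L^p(\R^n)$. Note that to obtain the zero-extension convergence, which is taken in the $W^{k,p}$-sense since $u\in W^{k,p}(\Omega)$, one actually approximates in the full $W^{k,p}(\Omega_i)$-norm; the stated first-order bound then follows a fortiori for $k\geq 1$.

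First I would invoke the Meyers--Serrin theorem ($H=W$): for an \emph{arbitrary} open set $\Omega_i\subset\R^n$ the space $C^\infty(\Omega_i)\cap W^{k,p}(\Omega_i)$ is dense in $W^{k,p}(\Omega_i)$, with no assumption on the regularity of $\partial\Omega_i$. Hence for each $i$ I can choose $v_i\in C^\infty(\Omega_i)$ with
\begin{align*}
 \|v_i-u_i\|_{W^{k,p}(\Omega_i)} < \tfrac{1}{i}.
\end{align*}
This immediately gives $\|v_i-u_i\|_{W^{k,p}(\Omega_i)}\to 0$, and since the $W^{1,p}$-norm is dominated by the $W^{k,p}$-norm, the stated conclusion $\|v_i-u_i\|_{W^{1,p}(\Omega_i)}\to 0$ follows.

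For the zero-extension convergence $v_i\xrightarrow{ZE} u$ I would estimate, for each multi-index $\alpha$ with $|\alpha|\leq k$,
\begin{align*}
 \|(D^\alpha v_i)^\sim-(D^\alpha u)^\sim\|_{L^p(\R^n)} \leq \|(D^\alpha v_i)^\sim-(D^\alpha u_i)^\sim\|_{L^p(\R^n)} + \|(D^\alpha u_i)^\sim-(D^\alpha u)^\sim\|_{L^p(\R^n)}.
\end{align*}
Summed over $|\alpha|\leq k$, the second term tends to zero precisely because $u_i\xrightarrow{ZE} u$ by hypothesis. For the first term I use that $v_i$ and $u_i$ are both defined on $\Omega_i$, so that $(D^\alpha v_i)^\sim-(D^\alpha u_i)^\sim=(D^\alpha v_i-D^\alpha u_i)^\sim$, and then the isometry property $\|\tilde w\|_{L^p(\R^n)}=\|w\|_{L^p(\Omega_i)}$ yields
\begin{align*}
 \sum_{|\alpha|\leq k}\|(D^\alpha v_i)^\sim-(D^\alpha u_i)^\sim\|_{L^p(\R^n)} = \sum_{|\alpha|\leq k}\|D^\alpha v_i-D^\alpha u_i\|_{L^p(\Omega_i)} = \|v_i-u_i\|_{W^{k,p}(\Omega_i)} \to 0.
\end{align*}
Summing the displayed estimate over $|\alpha|\leq k$ and letting $i\to\infty$ gives the claim.

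I do not expect a genuine obstacle here: the argument decomposes the error into an intrinsic approximation term on each $\Omega_i$, controlled by Meyers--Serrin, plus the assumed convergence. The only points requiring care are (i) that Meyers--Serrin must be used in its form valid for arbitrary open sets, since no regularity of $\partial\Omega_i$ is available and the approximants need not be smooth up to the boundary, and (ii) that the per-domain approximation is quantified uniformly enough (e.g.\ with error $1/i$) so that the diagonal sequence $(v_i)$ inherits convergence of the error to zero. Both are routine.
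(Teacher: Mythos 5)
Your proof is correct and follows essentially the same route as the paper's: a Meyers--Serrin approximation on each fixed $\Omega_i$ with error $1/i$, followed by the triangle inequality, using that zero-extension is an isometry for two functions on the same domain. Your extra care in approximating in the full $W^{k,p}$-norm (rather than only the $W^{1,p}$-norm that appears in the statement) is a welcome clarification of a point the paper's proof glosses over.
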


\begin{proof}
 For any fixed $i$, we can find a smooth approximation $v_i \in C^\infty(\Omega_i)$ by a standard convolution argument. This approximation can be chosen in such a way that $\|v_i-u_i\|_{W^{1,p}(\Omega_i)} < \frac{1}{i}$. But then also
 \begin{equation*}
  \|\tilde v_i - \tilde u\|_{W^{1,p}(\R^n)} \leq  \|v_i-u_i\|_{W^{1,p}(\Omega_i)} +  \|\tilde u_i - \tilde u\|_{W^{1,p}(\R^n)} \to 0.\qedhere
 \end{equation*}
\end{proof}

\begin{rem}
Going back to the reasoning behind the definition of abstract convergence, we can consider topology on pairs $(u_i, \Omega_i)$ of Sobolev functions and corresponding domain of definitions. This topology can be metrizable on $\biguplus_\Omega W^{k,p}(\Omega)$ (disjoint union over all bounded domains) by
 \begin{align*}
  d( (\Omega_1,u_1) ,(\Omega_2,u_2) ) := d_{\text{dom}}( \Omega_1,\Omega_2) + \sum_{|\alpha|\leq k}  \| (D^\alpha u_1)^\sim - (D^\alpha u_2)^\sim\|_{L^p(\R^n)},
 \end{align*}
 where $d_{\text{dom}}$ is a metric on the set of all bounded domains (e.g. Hausdorff distance).
 
 But it should be kept in mind that the second term is not a norm. In particular, two functions should only be considered identical if they share the same domain, so e.g.\ $0|_{\Omega_1}$ and $0|_{\Omega_2}$ are different, even if $\| (0|_{\Omega_1})^\sim - ( 0|_{\Omega_2})^\sim \|_{L^p(\R^n)} = 0$.
\end{rem}

\subsection{Weak convergence}

Having now a solid definition of strong convergence, we can define a corresponding notion of weak convergence. 
For this, we rely on a general notion of duality. In an abstract picture, this looks as follows:

\begin{defn}[Abstract weak and weak* convergence]\label{Abstractweak}
Let $(X_i)_{i \in \N}$ be a sequence of Banach spaces. 
Consider a sequence $(u_i)_{i \in \N}$, such that $u_i \in X_i$ for all $i \in \N$. 
Assume that there exists a limit space $X$ and we have a notion of abstract strong convergence.
Denote by $X_i^*$ and $X^*$ their respective topological duals. 
Similarly, assume that there exists a notion of abstract strong convergence for sequences in $(X_i^*)_{i \in \N}$.  
 \begin{enumerate}
  \item We say that sequence $u_i \in X_i$, converges in \textit{weak  $X$-abstract sense} to $u\in X$, or $u_i \rightharpoonup u$ if
  \begin{align*}
   \langle u_i, v_i \rangle_{X_i\times X_i^*} \to \langle u,v\rangle_{X\times X^*}
  \end{align*}
  for all sequences $v_i \in X_i^*$ and $v \in X^*$ such that $v_i \to v$.
  
  \item We say that sequence $v_i \in X_i^*$, converges in \textit{weak* $X$-abstract  sense} to $v\in X^*$, or $v_i \stackrel{\ast}{\rightharpoonup} v $ if
  \begin{align*}
   \langle u_i, v_i \rangle_{X_i\times X_i^*} \to \langle u,v\rangle_{X\times X^*}
  \end{align*}
  for all sequences $u_i \in X_i$ and $u \in X$ such that $u_i \to u$.
 \end{enumerate}
\end{defn}

For the sequence of Sobolev spaces $W^{k,p}(\Omega_i)$, one can give an equivalent and more explicit definition. 
For simplicity, we will focus on the cases $p \in (1,\infty)$ where they are reflexive. 
Namely, for zero-extension convergence we have the following proposition.

\begin{prop}[Characterization of weak zero-extension convergence]\label{Weakeqv}
    Let $(\Omega_i)_{i \in \N}$ be a sequence of domains that converges to a domain $\Omega$ in the Hausdorff-sense and let $u_i \in W^{k,p}(\Omega_i)$ for each $i\in\N$, 
    $u \in W^{k,p}(\Omega)$. 
    Then $u_i$ converges in the weak $W^{k,p}$-abstract sense with respect to zero-extension convergence ($u_i \xrightharpoonup{\text{ZE}} u$), if
 \begin{align*}
   \int_{\R^n} (D^\alpha u_i)^{\sim} \psi_\alpha \d x &\to \int_{\R^n} (D^\alpha u)^{\sim} \psi_\alpha \d x
 \end{align*}
 for all $|\alpha|\leq k$ and all $\psi_\alpha \in L^{p'}(\R^n)$, $p' = p/(p-1)$.
\end{prop}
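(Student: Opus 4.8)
The plan is to recast the abstract duality of Definition \ref{Abstractweak} as the concrete $L^p$–$L^{p'}$ duality transported by the zero-extension map, and then to reduce the claim to the elementary principle that the pairing of a weakly convergent sequence in $L^p(\R^n)$ with a strongly convergent sequence of functionals converges.

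First I would make the duality explicit. For each $i$ the map $J_i\colon u\mapsto \big((D^\alpha u)^\sim\big)_{|\alpha|\le k}$ embeds $W^{k,p}(\Omega_i)$ isometrically into the finite product $E=\prod_{|\alpha|\le k}L^p(\R^n)$, whose norm is chosen to match the $\sum_{|\alpha|\le k}\|\cdot\|_{L^p(\R^n)}$ appearing in the definition of zero-extension convergence. Since $p\in(1,\infty)$, the dual of $E$ is $\prod_{|\alpha|\le k}L^{p'}(\R^n)$, so by Hahn–Banach every $v_i\in (W^{k,p}(\Omega_i))^*$ is represented by some tuple $(\psi^i_\alpha)_{|\alpha|\le k}$ via $\langle u_i,v_i\rangle=\sum_{|\alpha|\le k}\int_{\R^n}(D^\alpha u_i)^\sim\,\psi^i_\alpha\d x$. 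I would fix the matching notion of strong convergence on the dual side to be zero-extension convergence of the representatives, i.e.\ $v_i\to v$ when representatives can be chosen with $\|\psi^i_\alpha-\psi_\alpha\|_{L^{p'}(\R^n)}\to 0$ for every $\alpha$; verifying that this is the notion compatible with Definition \ref{Abstractweak} is the part that needs care.

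For the substantive implication (the displayed condition $\Rightarrow$ abstract weak convergence), note that the hypothesis says precisely that $(D^\alpha u_i)^\sim\rightharpoonup (D^\alpha u)^\sim$ in $L^p(\R^n)$ for each $\alpha$; by Banach–Steinhaus this yields a uniform bound $\sup_i\|(D^\alpha u_i)^\sim\|_{L^p(\R^n)}\le C$. Given any strong dual sequence $v_i\to v$, I would add and subtract $\int_{\R^n}(D^\alpha u_i)^\sim\psi_\alpha\d x$ in each summand, estimate
\[
\Big|\int_{\R^n}(D^\alpha u_i)^\sim(\psi^i_\alpha-\psi_\alpha)\d x\Big|\le C\,\|\psi^i_\alpha-\psi_\alpha\|_{L^{p'}(\R^n)}\to 0,
\]
and let the remaining difference $\int(D^\alpha u_i)^\sim\psi_\alpha\d x-\int(D^\alpha u)^\sim\psi_\alpha\d x$ vanish by the hypothesis; summing over the finitely many $\alpha$ gives $\langle u_i,v_i\rangle\to\langle u,v\rangle$.

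For the converse direction contained in the word ``characterization'', it suffices to feed constant representatives into the abstract definition: fixing $\beta$ and $\psi\in L^{p'}(\R^n)$ and choosing $v_i$ represented by $\psi^i_\alpha=\delta_{\alpha\beta}\psi$ produces a trivially strongly convergent dual sequence, whence abstract weak convergence forces $\int_{\R^n}(D^\beta u_i)^\sim\psi\d x\to\int_{\R^n}(D^\beta u)^\sim\psi\d x$. The genuine obstacle is therefore not either limiting argument, both of which are routine, but the bookkeeping around the dual representation: justifying the $L^{p'}$-tuple description of $(W^{k,p}(\Omega_i))^*$, dealing with its non-uniqueness (harmless, since the pairing is unaffected), and confirming that the strong convergence used on the dual side is exactly the one demanded by Definition \ref{Abstractweak}.
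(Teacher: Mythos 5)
Your proposal is correct and follows essentially the same route as the paper's proof: representing dual functionals by tuples in $L^{p'}$ (Riesz/Hahn--Banach), taking zero-extension convergence of these representatives as the strong convergence on the dual side, and concluding via the add-and-subtract H\"older estimate against a uniform bound on $\|(D^\alpha u_i)^\sim\|_{L^p(\R^n)}$. Your only notable refinements are making the uniform bound explicit via Banach--Steinhaus (the paper uses it tacitly) and representing functionals directly on $\R^n$, so that constant tuples $\psi^i_\alpha=\psi$ serve as recovery sequences for the converse, whereas the paper constructs them by restriction and therefore invokes the extra assumption $\chi_{\Omega_i}\to\chi_\Omega$ a.e.\ for that step.
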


\begin{proof}
We prove this proposition for the case $k=1$. The general case follows by induction.

To do this, we can use the Riesz Representation theorem. Consider a functional $l \in (W^{1,p}(\Omega))^*$. For $1< p < \infty$, we can isometrically represent it as a pair $g = (\psi, \phi) \in L^{p^*}(\Omega; \mathbb{R}^{n+1})$, $\psi \in L^{p^*}(\Omega)$, $\phi \in L^{p^*}(\Omega; \mathbb{R}^n)$. Then, as a corresponding convergence on dual spaces, we consider a zero-extension convergence on $L^{p^*}(\Omega; \R^{n+1})$ since the convergence preserves under isometry.

In accordance to Definition \ref{Abstractweak}, the function $u \in W^{1,p}(\Omega)$ is a weak limit of $(u_i)_{i \in \N}$ if
$$
    \langle l_i, u_i \rangle_i \to \langle l, u \rangle|_{i \to \infty}
$$
for all sequences $l_i \in (W^{1,q}(\Omega_i))^*$ and all linear functionals $l \in (W^{1,p}(\Omega))^*$ such that $l_i \xrightarrow{ZE} l$.

By the Riesz Representation Theorem this is equivalent to
\begin{align*}
  \int_{\R^n} \tilde{u}_i \phi_i \d x &\to \int_{\R^n} \tilde{u} \phi \d x \\
  \int_{\R^n} (\nabla u_i)^{\sim} \psi_i \d x &\to \int_{\R^n} (\nabla u)^{\sim} \psi \d x
\end{align*}
for all sequences $g_i =(\phi_i, \psi_i)\in L^{p'}(\Omega_i;\R^{n+1})$ and all $g \in L^{p'}(\Omega; \R^{n+1})$, such that $g_i \xrightarrow{ZE} g$.

The above convergence does not depend on the choice of the sequence $g_i$. Let $(\phi_i)_{i \in \N}$ and $(\theta_i)_{i \in \N}$ be two sequences that converge to the same zero-extension limit $\phi \in L^{p^*}(\Omega)$. Let also
$$
\int_{\R^n} \tilde{u}_i \phi_i \d x \to \int_{\R^n} \tilde{u} \phi \d x.
$$
Then
\begin{align*}
    \int_{\R^n} \tilde{u}_i \theta_i \d x &= \int_{\R^n} \tilde{u}_i \theta_i \d x + \int_{\R^n} \tilde{u}_i \phi_i \d x - \int_{\R^n} \tilde{u}_i \phi_i \d x = \int_{\R^n} \tilde{u}_i (\theta_i -\phi_i) \d x + \int_{\R^n} \tilde{u}_i \phi_i dx \\ 
    &\leq \bigg(\int_{\R^n}|\tilde{u}_i|^p \d x\bigg)^{1/p} \bigg(\int_{\R^n} |\theta_i - \phi_i|^{p^*} \d x\bigg)^{1/p^*} + \int_{\R^n} \tilde{u}_i \phi_i \d x \to \int_{\R^n} \tilde{u} \phi \d x.
\end{align*}

Note that, if $\Omega_i \to \Omega$ even in the sense $\chi_{\Omega_i} \to \chi_\Omega$ a.e. on $\mathbb{R}^n$ and $1< p < \infty$, then for every linear functional on $l \in (W^{1,p}(\Omega))^*$ we can take a sequence $(l_i)_{i \in \N}$, $l_i \in (W^{1,p}(\Omega_i))^*$ for all $i\in\N$, such that $l_i \xrightarrow{ZE} l|_{i \to \infty}$. Indeed, again we identify linear functionals on $(W^{1,p}(\Omega_i))^*$ with functions $g_i = (\psi_i, \phi_i) \in L^{p^*}(\Omega_i; \mathbb{R}^{n+1})$. Then the sequence $g_i = (\widetilde{\psi}|_{\Omega_i}, \widetilde{\phi}|_{\Omega_i})$ converges to $g = (\psi, \phi) \in L^{p^*}(\Omega; \mathbb{R}^{n+1})$ in the zero-extension sense.

These two facts lead us to the conclusion that Definition \ref{Abstractweak} is equivalent to the statement in the proposition.
\end{proof}

In turn, the above equivalent definition of weak zero-extension convergence directly implies that we have the \textit{reflexivity property} in a sense that weak and weak* zero-extension convergences coincide. Similarly to before we also have uniqueness of limits.

\begin{prop}
    Let $(\Omega_i)_{i \in \N}$ be a sequence of domains that converges to a domain $\Omega$ in the Hausdorff-sense and let $u_i \in W^{k,p}(\Omega_i)$ for each $i\in\N$, $u \in W^{k,p}(\Omega)$. If $(u_i)_{i \in \N}$ converges in the weak $W^{1,p}$-zero-extension sense, then such limit is unique.
\end{prop}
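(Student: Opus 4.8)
The plan is to prove uniqueness of the weak zero-extension limit by reducing it to the characterization established in Proposition \ref{Weakeqv}. By that proposition, the statement $u_i \xrightharpoonup{\text{ZE}} u$ is equivalent to the convergence
\begin{align*}
  \int_{\R^n} (D^\alpha u_i)^\sim \psi_\alpha \d x \to \int_{\R^n} (D^\alpha u)^\sim \psi_\alpha \d x
\end{align*}
for every multi-index $|\alpha| \leq k$ and every test function $\psi_\alpha \in L^{p'}(\R^n)$. Suppose now that the same sequence $(u_i)_{i \in \N}$ converges weakly in the zero-extension sense to two candidate limits $u, w \in W^{k,p}(\Omega)$. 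Then for each fixed $\alpha$ and each $\psi_\alpha$, both $\int_{\R^n} (D^\alpha u)^\sim \psi_\alpha \d x$ and $\int_{\R^n} (D^\alpha w)^\sim \psi_\alpha \d x$ arise as the same limit of the common sequence $\int_{\R^n} (D^\alpha u_i)^\sim \psi_\alpha \d x$, so by uniqueness of limits in $\R$ they must coincide.

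The second step is to upgrade this equality of integrals into equality of the functions themselves. For each fixed $\alpha$ we have
\begin{align*}
  \int_{\R^n} \big( (D^\alpha u)^\sim - (D^\alpha w)^\sim \big) \psi_\alpha \d x = 0
\end{align*}
for all $\psi_\alpha \in L^{p'}(\R^n)$. Since $p \in (1,\infty)$, $L^{p'}(\R^n)$ is precisely the dual of $L^p(\R^n)$ and the pairing is nondegenerate, so this forces $(D^\alpha u)^\sim = (D^\alpha w)^\sim$ as elements of $L^p(\R^n)$. Taking $\alpha = 0$ in particular gives $\tilde{u} = \tilde{w}$ on $\R^n$, and restricting to $\Omega$ yields $u = w$ in $L^p(\Omega)$; the higher-order components then match automatically. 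One should note that both $u$ and $w$ are assumed to live in $W^{k,p}(\Omega)$ on the same limit domain $\Omega$, which is what makes the restriction from $\R^n$ back to $\Omega$ unambiguous (this is the same mechanism used in Proposition \ref{lem:supp}).

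I expect the main subtlety to be conceptual rather than computational: one must be careful that the characterization in Proposition \ref{Weakeqv} is genuinely an equivalence (an ``if and only if'') usable in both directions, so that weak convergence to $u$ may be \emph{replaced} by the integral convergence without loss. Given the phrasing of that proposition, the safest route is to argue directly from the definition of abstract weak convergence if needed, but the cleaner argument goes through the explicit characterization. A second point worth stating explicitly is that uniqueness here is uniqueness of the limit \emph{as a function on the fixed limit domain} $\Omega$; the disjoint-union caveat in the preceding remark does not interfere, since the domain $\Omega$ is fixed throughout and we are only identifying the function defined on it. With these observations the proof is essentially a density/duality argument and no hard analysis is required.
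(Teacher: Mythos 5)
Your argument is correct and follows essentially the same route as the paper: both pass through the characterization in Proposition \ref{Weakeqv} to conclude that the two candidate limits pair identically against every $\psi_\alpha \in L^{p'}(\R^n)$, and then use the nondegeneracy of the $L^p$--$L^{p'}$ duality (the paper phrases this as the dual representation of the $W^{1,p}$-norm) to conclude the functions coincide on $\Omega$. The only cosmetic difference is that you treat general $k$ via arbitrary multi-indices while the paper writes out $k=1$ and defers the rest to induction.
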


\begin{proof} Again, we provide a proof only for the case $k=1$, the general case easily follows.
    Assume that there exist two zero-extension weak limits, $u_i \xrightharpoonup{\text{ZE}} u$, $u_i \xrightharpoonup{\text{ZE}} v$, with $u, v \in W^{1,p}(\Omega)$. 
    Then, by Proposition \ref{Weakeqv} for all $g=(\phi, \psi)\in L^{p'}(\R^n;\R^{n+1})$,
    \begin{align*}
        \int_{\R^n} \tilde{u} \phi \d x = \lim\limits_{i \to \infty}\int_{\R^n} \tilde{u}_i \phi \d x &= \int_{\R^n} \tilde{v} \phi \d x  \\
        \int_{\R^n} (\nabla u)^{\sim} \psi \d x = \lim\limits_{i \to \infty}\int_{\R^n} (\nabla u_i)^{\sim} \psi \d x &= \int_{\R^n} (\nabla v)^{\sim} \psi \d x.
    \end{align*}

    Therefore,
    $$
        \|u-v\|_{W^{1,p}(\Omega)} \leq 
        \sup\limits_{\|\phi\|_{L^{p'}(\Omega)} = 1} \int_{\Omega}  |u-v||\phi| \, \d x + \sup\limits_{\|\psi\|_{L^{p'}(\Omega; \mathbb{R}^n)} = 1} \int_{\Omega}  |\nabla u - \nabla v||\psi|  \, \d x = 0,
    $$
    which finishes the proof.
\end{proof}

\subsection{Weak*-compactness and Banach-Alaoglu}

We begin our studies of compactness with compactness in the weak* topology. Specifically, we aim to create a version of the Banach-Alaoglu theorem. Although this is a quite abstract theorem that is not restricted only to Sobolev-spaces, its usual formulation still requires the structure of a Banach space, which we do not have. Thus we cannot rely on simply citing a general formulation from other sources and have to do the proof ourselves. However, the underlying ideas follow the classical proof closely.

\begin{thm}[Banach-Alaoglu Theorem]\label{BA-internal}
Let $(\Omega_i)_{i \in \N}$ be a sequence of domains that converges to a domain $\Omega$ in the Hausdorff-sense and let $u_i \in W^{k,p}(\Omega_i)$ for each $i\in\N$. If a sequence $(u_i)_{i \in \N}$ is bounded: $\sup\limits_{i \in \mathbb{N}}\|u_i\|_{W^{1,p}(\Omega_i)} < C$, 
then there exists $u \in W^{1,p}(\Omega)$ and a subsequence (not relabeled) such that $u_{i}$ converges in the weak
 $W^{1,p}$-zero-extension sense to $u$. 
\end{thm}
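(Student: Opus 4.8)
The plan is to push everything into the fixed ambient space $L^p(\R^n)$, where the classical Banach--Alaoglu theorem is available, and then to recover the limit on $\Omega$ exactly as in the proofs of Proposition \ref{lem:supp} and Proposition \ref{thm:CauchyZE}. I will carry out the argument for $k=1$ (consistent with the boundedness and conclusion being stated in $W^{1,p}$); the general case is identical, working with the finite family $\{(D^\alpha u_i)^\sim : |\alpha|\le k\}$. Throughout I work in the reflexive regime $1<p<\infty$ as in the rest of this section.

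First I would observe that the zero-extensions decouple the moving domains from the functional-analytic core. Since $\|\tilde u_i\|_{L^p(\R^n)} = \|u_i\|_{L^p(\Omega_i)}$ and $\|(\nabla u_i)^\sim\|_{L^p(\R^n;\R^n)} = \|\nabla u_i\|_{L^p(\Omega_i;\R^n)}$, the hypothesis $\sup_i \|u_i\|_{W^{1,p}(\Omega_i)} < C$ gives a uniform bound on the vector $w_i := (\tilde u_i, (\nabla u_i)^\sim)$ in the single reflexive space $L^p(\R^n;\R^{n+1})$. By weak compactness of bounded sets in reflexive spaces, I can extract a (not relabeled) subsequence with $w_i \rightharpoonup (v^0,v^1,\dots,v^n)$ weakly in $L^p(\R^n;\R^{n+1})$.

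Next I would localize the limit to $\Omega$, which is the weak analogue of Proposition \ref{lem:supp}. Fix $\varepsilon>0$; by Hausdorff convergence $\Omega_i \subset \Omega^\varepsilon$ for all large $i$, so each $\tilde u_i$ and each $(\nabla u_i)^\sim$ vanishes a.e.\ outside $\overline{\Omega^\varepsilon}$. Testing the weak convergence against $\psi \in L^{p'}(\R^n)$ supported in $\R^n \setminus \overline{\Omega^\varepsilon}$ forces $v^0 = 0$ and $v^j = 0$ a.e.\ there; since $\R^n\setminus\overline{\Omega} = \bigcup_{\varepsilon>0}(\R^n\setminus\overline{\Omega^\varepsilon})$, letting $\varepsilon \to 0$ shows all $v^j$ vanish a.e.\ on $\R^n\setminus\overline{\Omega}$. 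Hence each $v^j$ is the zero-extension of its restriction $v^j|_\Omega$, and I set $u := v^0|_\Omega \in L^p(\Omega)$.

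The heart of the matter, and the step I expect to be the main obstacle, is verifying that $u \in W^{1,p}(\Omega)$ with $\nabla u = (v^1,\dots,v^n)|_\Omega$, i.e.\ that the weak limit of the gradients is the gradient of the weak limit despite the domains moving. Following the test-function computation in the proof of Proposition \ref{thm:CauchyZE}, I fix $\varphi \in C_0^\infty(\Omega)$, so that $\supp\varphi \subset \Omega_i$ for all large $i$, and integrate by parts on $\Omega_i$:
\begin{align*}
 \int_{\R^n} \tilde u_i\, \partial_j \varphi \d x = \int_{\Omega_i} u_i\, \partial_j\varphi \d x = -\int_{\Omega_i} \partial_j u_i\, \varphi \d x = -\int_{\R^n} (\partial_j u_i)^\sim \varphi \d x.
\end{align*}
Since $\partial_j\varphi, \varphi \in L^{p'}(\R^n)$, passing to the weak limit on both sides yields $\int_\Omega u\, \partial_j\varphi \d x = -\int_\Omega v^j|_\Omega\, \varphi \d x$ for every $\varphi \in C_0^\infty(\Omega)$, which is exactly the statement that $v^j|_\Omega$ is the weak $j$-th derivative of $u$. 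Thus $u \in W^{1,p}(\Omega)$ and $(\nabla u)^\sim = (v^1,\dots,v^n)$. Finally, as $\tilde u_i \rightharpoonup v^0 = \tilde u$ and $(\nabla u_i)^\sim \rightharpoonup (\nabla u)^\sim$ weakly in $L^p$, the characterization in Proposition \ref{Weakeqv} gives precisely $u_i \xrightharpoonup{\text{ZE}} u$. In this scheme the genuine difficulty of moving domains is absorbed entirely by the fixed ambient space together with the eventual inclusion $\supp\varphi\subset\Omega_i$; once these are in place, the extraction is the familiar reflexive-space argument.
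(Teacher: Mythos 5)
Your proposal is correct and follows essentially the same route as the paper's proof: extract a weakly convergent subsequence of the zero-extensions in the fixed space $L^p(\R^n)$, localize the support of the limit to $\overline{\Omega}$ via Hausdorff convergence, and identify the limit of the extended gradients as the gradient of the limit by testing against $\varphi\in C_0^\infty(\Omega)$ with $\supp\varphi\subset\Omega_i$ for large $i$. The only cosmetic difference is that you conclude via the characterization in Proposition \ref{Weakeqv}, whereas the paper verifies the duality pairing of Definition \ref{Abstractweak} directly against strongly ZE-convergent test sequences.
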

\begin{proof}
By the assumption 
\[
\|\widetilde{u}_i\|_{L^p(\mathbb R^n)} + \sum_{j=1}^{n}\|(\partial_ju_i)^\sim\|_{L^p(\mathbb R^n)}  \leq C.
\]
First, we prove that there is $u\in L^p(\Omega)$ such that $u_i \xrightharpoonup{\text{ZE}} u$ converges as $L^p$-functions.
By the weak compactness of $L^p(\mathbb R^n)$  there is a subsequence (not relabeled) $(\widetilde{u}_i)_{i\in\mathbb N}$ which weakly converges to some $w\in L^p(\mathbb R^n)$.
In particular for any $\varepsilon > 0$ and $\varphi\in C^\infty_0(\mathbb R^n\setminus\overline{\Omega^\varepsilon})$ we have
\[
0 = \lim_{i\to\infty}  \int_{\mathbb R^n} \varphi(x)\widetilde{u}_i(x) \ \textrm{d}x =  \int_{\mathbb R^n} \varphi(x)w(x) \ \textrm{d}x.
\]
The last implies that $\supp w \subset \overline{\Omega}$. Then we define $u:= w|_{\Omega}$.
Let $v_i\in L^p(\Omega_i)$ and $v_i \xrightarrow{ZE} v$ converge as $L^p$-functions.
Then 
\[
\int_{\Omega_i} v_i(x)u_i(x) \ \textrm{d}x = \int_{\mathbb R^n} \widetilde{v}_i(x)\widetilde{u}_i(x) \ \textrm{d}x \to \int_{\mathbb R^n} \widetilde{v}(x)w(x) \ \textrm{d}x =  \int_{\Omega} v(x)u(x) \ \textrm{d}x.
\]
So we proved that  $(u_i)_{i \in \N}$ converges in weak $L^p$-zero-extension sense to $u$. 
In the same manner, we obtain $\partial_ju_i \xrightharpoonup{\text{ZE}} w_j$ as $L^p$-functions. 
It remains to show that $w_j = \partial_j u$. 

Let $\varphi\in C^\infty_0(\Omega)$. Note that there is a number $N$ such that $\supp\varphi \subset \Omega_i$ for all $i>N$. Then we derive
\[
\int_{\Omega} \varphi(x)w_j(x) \ \textrm{d}x 
= \lim_{i\to\infty} \int_{\Omega_i} \varphi(x)\partial_j u_i(x) \ \textrm{d}x
= \lim_{i\to\infty} \int_{\Omega_i} \frac{\partial\varphi}{\partial x_j}(x) u_i(x) \ \textrm{d}x
= \int_{\Omega} \frac{\partial\varphi}{\partial x_j}(x) u(x) \ \textrm{d}x.\qedhere
\]
\end{proof}

\section{Strong compactness and Rellich-Kondrachov} \label{sec:compactness}

Next, we want to consider compactness in the strong topology. 
This relies more on the specific structure inherent to Sobolev spaces. In particular, the proof relies on the Sobolev inequality that usually involves integration along lines. 
As such, it is not surprising that, at this point, we require additional regularity of the boundary in order to make more general statements.

\subsection{Poincar\'e-sequences of domains.}
For $1\leq p < n$, a ball $B\subset\R^n$, and a function $u\in W^{1,p}(B)$ the following Poincar\'e inequality holds:
\begin{equation}\label{eq:PI}
\bigg( \fint_{B} |u(x) - u_B|^{p^*} \ \textrm{d}x\bigg)^{1/p^*} \leq C_P\cdot r\cdot \bigg( \fint_B |\nabla u |^p \ \textrm{d}x\bigg)^{1/p}
\end{equation}
where $r$ is the radius of $B$ and $p^* = \frac{np}{n-p}$. The constant $C_P$ depends only on $p$ and $n$.
See e.g. \cite[Lemma 4.9]{EG2015}.
 When a domain possesses some regularity, a similar global inequality holds. 
 However, in contrast to the case above, the constant will depend on the geometry of a particular domain.
 We thus find it convenient to introduce the notion of a domain sequence that supports uniform global Poincar\'e inequality.

\begin{defn} Let $p,q\in[1,\infty]$ and $\Omega_i\subset \R^n$ be bounded domains.
We call $(\Omega_i)_{i\in\mathbb N}$ a $(q,p)$\textit{-Poincar\'e sequence} of domains if
there is a constant $C'_P$ such that the following inequality holds true
\begin{equation}\label{eq:qp-PI}
\bigg(\fint_{\Omega_i} |u(x) - u_{\Omega_i}|^q \ \textrm{d}x\bigg)^{1/q} \leq C'_P\cdot \bigg( \fint_{\Omega_i} |\nabla u |^p \ \textrm{d}x \bigg)^{1/p}
\end{equation} 
for all $u\in W^{1,p}(\Omega_i)$, $i \in \N$.
\end{defn}
In particular, from \eqref{eq:qp-PI}, we have a uniform Sobolev inequality: %
\begin{lem}
Let $(\Omega_i)_{i\in\mathbb N}$ be a $(q,p)$\textit{-Poincar\'e sequence} and suppose that $\sup_{i}|\Omega_i|^{1/q - 1/p} <\infty$.  
Then, there exists a uniform constant $C$ such that 
 \begin{equation}\label{eq:uniformSobolev}
  \|u \|_{L^{q}(\Omega_i)} \leq C \|u \|_{W^{1,p}(\Omega_i)}
 \end{equation}
 for any $u \in W^{1,p}(\Omega_i)$, $i \in \N$ .
\end{lem}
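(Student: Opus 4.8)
The plan is to reduce the claimed uniform Sobolev inequality to the defining Poincaré inequality \eqref{eq:qp-PI} by carefully tracking the powers of $|\Omega_i|$ produced by the averaged integrals $\fint$, and then to control the remaining mean value $u_{\Omega_i}$ by Hölder's inequality.

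First I would rewrite \eqref{eq:qp-PI} in terms of unnormalized norms. Since $\fint_{\Omega_i} f \,\d x = |\Omega_i|^{-1}\int_{\Omega_i} f\,\d x$, the inequality takes the form
\[
\|u - u_{\Omega_i}\|_{L^q(\Omega_i)} \leq C'_P\, |\Omega_i|^{1/q - 1/p}\, \|\nabla u\|_{L^p(\Omega_i)}.
\]
Next I would split $u = (u - u_{\Omega_i}) + u_{\Omega_i}$ and apply the triangle inequality in $L^q(\Omega_i)$, giving $\|u\|_{L^q(\Omega_i)} \leq \|u - u_{\Omega_i}\|_{L^q(\Omega_i)} + \|u_{\Omega_i}\|_{L^q(\Omega_i)}$. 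The first term is already handled. For the second, since $u_{\Omega_i}$ is a constant we have $\|u_{\Omega_i}\|_{L^q(\Omega_i)} = |u_{\Omega_i}|\,|\Omega_i|^{1/q}$, while Hölder's inequality yields $|u_{\Omega_i}| = |\Omega_i|^{-1}\big|\int_{\Omega_i} u\,\d x\big| \leq |\Omega_i|^{-1/p}\|u\|_{L^p(\Omega_i)}$; hence $\|u_{\Omega_i}\|_{L^q(\Omega_i)} \leq |\Omega_i|^{1/q-1/p}\|u\|_{L^p(\Omega_i)}$.

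Finally I would collect the two estimates. Both contributions carry precisely the factor $|\Omega_i|^{1/q - 1/p}$, which is uniformly bounded by $M := \sup_i |\Omega_i|^{1/q-1/p} < \infty$ by hypothesis. This gives
\[
\|u\|_{L^q(\Omega_i)} \leq M\big(C'_P \|\nabla u\|_{L^p(\Omega_i)} + \|u\|_{L^p(\Omega_i)}\big) \leq M\max\{C'_P, 1\}\,\|u\|_{W^{1,p}(\Omega_i)},
\]
with a constant independent of $i$, as required.

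As for obstacles, the argument is essentially bookkeeping and I do not expect any genuine difficulty. The one point demanding care is the uniformity in $i$: every power of $|\Omega_i|$ generated along the way must combine into exactly the exponent $1/q - 1/p$ controlled by the assumption, so that the final constant $C$ really does not depend on $i$. In this sense the hypothesis $\sup_i |\Omega_i|^{1/q-1/p} < \infty$ is precisely what is needed (for instance, when $q > p$ it forces the volumes $|\Omega_i|$ to stay bounded away from zero), and it is the Hölder step that allows the lower-order term $\|u\|_{L^p(\Omega_i)}$ to be absorbed with the same exponent.
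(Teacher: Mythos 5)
Your proof is correct and follows essentially the same route as the paper: decompose $u=(u-u_{\Omega_i})+u_{\Omega_i}$, apply the $(q,p)$-Poincar\'e inequality to the oscillation term and H\"older's inequality to the mean, and observe that both contributions carry exactly the factor $|\Omega_i|^{1/q-1/p}$, which is bounded by hypothesis. The only difference is cosmetic (you work with unnormalized norms, the paper keeps the averaged integrals), and your write-up is in fact slightly cleaner about where each power of $|\Omega_i|$ comes from.
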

\begin{proof}
Indeed
\begin{align*}
\bigg(\int_{\Omega_i} |u(x)|^q \ \textrm{d}x\bigg)^{1/q}
\leq 
|\Omega_i|^{1/q}\bigg(\int_{\Omega_i} |u(x) - u_{\Omega_i}|^q \ \textrm{d}x\bigg)^{1/q} + |\Omega_i|^{1/q}u_{\Omega_i}\\
\leq |\Omega_i|^{1/q} C'_P\cdot \bigg( \fint_{\Omega_i} |\nabla u |^p \ \textrm{d}x \bigg)^{1/p}
+ |\Omega_i|^{1/q} \bigg( \fint_{\Omega_i} |u | \ \textrm{d}x \bigg)\\
\leq |\Omega_i|^{1/q - 1/p}(C'_P +1) \|u\|_{W^{1,p}(\Omega_i)}.
 \end{align*}

Thus we have \eqref{eq:uniformSobolev},  where we can choose $C = (C'_P +1)\sup_{i}|\Omega_i|^{1/q - 1/p}$.
\end{proof}

The $(q,p)$-Poincar\'e condition is less restrictive then Sobolev extendability of all $\Omega_i$ and is in a good correspondence with usual Sobolev embedding theory. As a simple example of a sequence with this property we consider the following class of domains (this class is also suitable for the following sections).

\begin{defn}[Uniform Lipschitz-graph condition]
  Let $(\Omega_i)_{i \in \N}$ be a sequence of bounded domains that converges to a domain $\Omega$ in the Hausdorff-sense. We say that this sequence satisfies the \emph{uniform Lipschitz-graph condition} if there is a common, locally finite open cover $(U_k)_{k \in \N}$ for all $\partial \Omega_i$ and $\partial \Omega$ as well as a set of corresponding directions $\nu_k \in \mathbb{S}^{n-1}$ such that all sets $\Omega_i \cap U_k$ and $\Omega \cap U_k$ are subgraphs of a Lipschitz-function in direction $\nu_k$ and the Lipschitz-constants of all those functions are uniformly bounded.
\end{defn}

Following the classic proof for the Sobolev-inequality, one can then derive the following:

\begin{cor} \label{cor:LipschitzSobolevInequality}
  Let $(\Omega_i)_{i \in \N}$ be a sequence of bounded domains that converges to a domain $\Omega$ in the Hausdorff-sense. If this sequence satisfies the uniform Lipschitz-graph condition, then it is a $(q,p)$-Poincar\'e sequence for any $p \in [1,\infty]$ and $1 \leq q \leq \frac{np}{n-p}$ for $p<n$ and $q \in [1,\infty)$ for $p\geq n$.
\end{cor}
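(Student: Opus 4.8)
The plan is to follow the classical proof of the Sobolev--Poincar\'e inequality, keeping track of the fact that, under the uniform Lipschitz-graph condition, \emph{every} constant entering that proof is controlled by data that is uniform in $i$. The first step is a reduction to a finite, uniformly good atlas. Since $\Omega_i \to \Omega$ in the Hausdorff sense, there is a fixed bounded set containing $\Omega$ and all $\Omega_i$ with $i$ large; by local finiteness of the common cover $(U_k)_{k}$, only finitely many charts meet this set, so the boundaries $\partial\Omega_i$ and $\partial\Omega$ are all described by the \emph{same} finite collection of Lipschitz graphs, in the same directions $\nu_k$, with one common Lipschitz bound $L$. In particular the measures $|\Omega_i|$, the diameters, and the inradii are uniformly bounded (away from $0$ and $\infty$), and, most importantly, the local graph structure together with connectedness yields a \emph{uniform John (interior cone plus chaining) constant}: each $\Omega_i$ is a John domain with parameters depending only on $L$, on the finite atlas, and on the geometry of the fixed containing set.

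With a uniform John constant in hand, the inequality \eqref{eq:qp-PI} is produced exactly as in the fixed-domain case, and it is here that homogeneity matters. The classical representation estimate for John domains gives, for a.e.\ $x\in\Omega_i$, a pointwise Riesz-potential bound $|u(x) - u_{\Omega_i}| \le C \int_{\Omega_i} |\nabla u(y)|\,|x-y|^{1-n}\,\d y$ with $C$ depending only on the John constant; crucially its right-hand side involves $\nabla u$ alone. Applying the Hardy--Littlewood--Sobolev (fractional integration) inequality to this potential then yields $\|u - u_{\Omega_i}\|_{L^{p^*}(\Omega_i)} \le C' \|\nabla u\|_{L^p(\Omega_i)}$ for $1\le p<n$, with $p^*=\frac{np}{n-p}$ and $C'$ uniform. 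For $p\ge n$ the same potential bound combined with H\"older's inequality gives the corresponding $L^q$ estimate for every finite $q$ (respectively $L^\infty$ for $p>n$). Passing to the normalized (averaged) form \eqref{eq:qp-PI} only introduces powers of $|\Omega_i|$, which are uniformly bounded by the first step; and the remaining exponents $1\le q<p^*$ follow from the critical one by H\"older's inequality, whose constant is again a uniformly bounded power of $|\Omega_i|$.

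The main obstacle is not the existence of the inequality for each fixed $i$ --- that is entirely classical --- but the \emph{uniformity} of the constant $C'_P$, i.e.\ ruling out any degeneration along the sequence. Concretely, the one quantity that must be shown not to blow up is the John constant: a sequence of Lipschitz domains could in principle develop ever-thinner necks or ever-longer chains, sending the Sobolev--Poincar\'e constant to infinity. This is exactly what the hypotheses prevent. The requirement that a single locally finite cover $(U_k)$ with fixed directions $\nu_k$ and one common Lipschitz bound describe \emph{all} of the $\partial\Omega_i$ forbids the local boundary geometry from deteriorating, while Hausdorff convergence to the \emph{fixed} domain $\Omega$ confines every $\Omega_i$ to a neighbourhood of $\Omega$ and prevents splitting or the formation of thin channels. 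Making this reduction from the local graph data to a single uniform John (or, equivalently, uniform extension) constant precise is the one place where genuine work beyond the classical argument is required; once it is done, the rest of the proof is the verbatim fixed-domain computation with a constant that no longer sees $i$.
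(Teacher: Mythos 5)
The paper itself offers no proof of this corollary beyond the one-line remark that it follows the classic proof of the Sobolev inequality, so your proposal is essentially an elaboration of the intended argument: reduce to a fixed finite atlas, extract a uniform chaining/John constant, and run the standard potential estimate with constants that no longer depend on $i$. You also correctly identify the crux (uniformity of the chaining constant) and the role of the hypotheses in excluding degenerating necks, in line with the paper's own Remark on non-uniform Lipschitz sequences. Two soft spots are worth naming. First, the passage from the uniform Lipschitz-graph data to a uniform John constant is asserted rather than proved; to make it precise one needs, for instance, a Lebesgue-number-type argument guaranteeing that every point of every $\partial\Omega_i$ sits uniformly deep inside some chart $U_k$ (otherwise the interior cone furnished by the graph representation could be truncated by $\partial U_k$ and shrink along the sequence), after which compactness of the uniformly Lipschitz graph functions over the finite atlas yields the uniform constant. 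Second, the Hardy--Littlewood--Sobolev inequality you invoke is false at $p=1$ for the critical exponent $q=n/(n-1)$ (only the weak-type bound holds there), and the corollary explicitly includes $p=1$ with $q$ up to $n/(n-1)$; that endpoint requires the Maz'ya truncation argument applied to the level sets of $u-u_{\Omega_i}$ rather than fractional integration, while the subcritical range does follow from the weak-type bound together with the uniform two-sided bounds on $|\Omega_i|$ as you say. Neither issue changes the conclusion, but both would need to be filled in to turn the sketch into a complete proof.
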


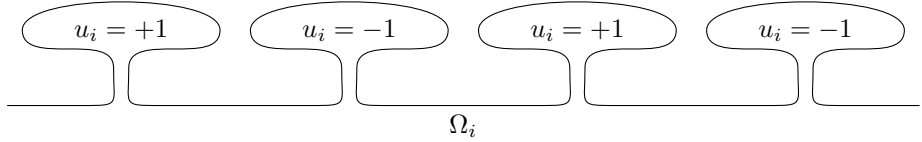
\begin{figure}[h]
 \begin{tikzpicture}
  \draw (0,0) -- (1,0) .. controls +(.5,0) and +(0,-.5) .. (1.4,.5) .. controls +(0,.5) and +(0,-.5) .. (.2,1) .. controls +(0,.5) and +(0,.5) .. (2.8,1) .. controls +(0,-.5) and +(0,.5) .. (1.6,.5) .. controls +(0,-.5,0) and +(-.5,0) .. (2,0) -- (3,0)  -- (4,0) .. controls +(.5,0) and +(0,-.5) .. (4.4,.5) .. controls +(0,.5) and +(0,-.5) .. (3.2,1) .. controls +(0,.5) and +(0,.5) .. (5.8,1) .. controls +(0,-.5) and +(0,.5) .. (4.6,.5) .. controls +(0,-.5,0) and +(-.5,0) .. (5,0) -- (6,0) -- (7,0) .. controls +(.5,0) and +(0,-.5) .. (7.4,.5) .. controls +(0,.5) and +(0,-.5) .. (6.2,1) .. controls +(0,.5) and +(0,.5) .. (8.8,1) .. controls +(0,-.5) and +(0,.5) .. (7.6,.5) .. controls +(0,-.5,0) and +(-.5,0) .. (8,0) -- (9,0)  -- (10,0) .. controls +(.5,0) and +(0,-.5) .. (10.4,.5) .. controls +(0,.5) and +(0,-.5) .. (9.2,1) .. controls +(0,.5) and +(0,.5) .. (11.8,1) .. controls +(0,-.5) and +(0,.5) .. (10.6,.5) .. controls +(0,-.5,0) and +(-.5,0) .. (11,0) -- (12,0);
  \node at (1.5,1) {$u_i=+1$};
  \node at (4.5,1) {$u_i=-1$};
  \node at (7.5,1) {$u_i=+1$};
  \node at (10.5,1) {$u_i=-1$};
  \node[below] at (6,0) {$\Omega_i$};
 \end{tikzpicture}
 \caption{A sketch of domains $\Omega_i$ and functions $u_i$ in Remark \ref{rem:nonUniformLipschitz}.}
\end{figure}

\begin{rem} \label{rem:nonUniformLipschitz}
 Note that the uniformity in the graph-condition is in some sense crucial. Consider a sequence of domains that consists of more and more, increasingly smaller blobs near a main domain but attached only via thin necks. This can be done in a similar way to the examples in \cite[Sec.\ 1.1.4]{M2011}, except that the refinement happens along the sequence of the domains instead of along the boundary of a fixed domain. This way, the values of $u_i$ on those blobs can differ strongly from that on the main domain without incurring much of a penalty in the main domain. Nevertheless, each domain itself is Lipschitz.
 
 Then if the values on those blobs alternate in sign, the weak limit of $\widetilde{u}_i$ has to be zero, but $\| u_i\|_{L^p(\Omega_i)}$ can be chosen as a fixed nonzero number, which shows that there is no uniform embedding constant and also for the next section that there can be no strong convergence and thus no compact subsequence. In higher dimensions this can even be done in such a way that $\partial \Omega_i$ has bounded mass.

 Note also that by careful choice of parameters in this construction, one can find examples of sequences of domains which are only $(q,p)$-Poincar\'e for $1 \leq q \leq q_0$ for some given $q_0 < \frac{np}{n-p}$.
\end{rem}


\subsection{Strong compactness}
Let $U\subset \mathbb R^n$ bounded and $\Omega\subset U$.
We will use the following properties of the Hausdorff convergence. 
For $\alpha>0$ define
\[
\Omega_{\alpha+} = \{x\in U : \dist(x, \Omega) <\alpha \},  \quad \Omega_{\alpha-} = \{x\in \Omega : \dist(x, U\setminus \Omega) <\alpha \}.
\]

 \begin{prop}\label{prop:alpha+-}
 Let $(\Omega_i)_{i\in\mathbb N}$ be a sequence of domains that converge to a domain $\Omega$ in the Hausdorff-sense. 
 Then for any $\alpha>0$ there is $N\in\N$ such that for all $i> N$ 
  \[
   \Omega_i \subset \Omega_{\alpha+}  \quad \text{and} \quad \Omega_{\alpha-} \subset \Omega_i.
 \]
 \end{prop}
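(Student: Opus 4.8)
I would prove the two inclusions separately, since each reflects a different ``side'' of the set convergence; throughout I use that $\Omega_i\subset U$ for all large $i$, which follows from $d_H(\Omega_i,\Omega)\to0$ as soon as $U$ contains a neighbourhood of $\Omega$. The inclusion $\Omega_i\subset\Omega_{\alpha+}$ is the elementary half: by definition of the Hausdorff distance $\sup_{x\in\Omega_i}\dist(x,\Omega)\le d_H(\Omega_i,\Omega)$, so for all $i$ past some $N$ we have $\sup_{x\in\Omega_i}\dist(x,\Omega)<\alpha$. Every $x\in\Omega_i$ then satisfies $\dist(x,\Omega)<\alpha$ and lies in $U$, hence $x\in\Omega_{\alpha+}$. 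No structure beyond the definition is needed here.

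For $\Omega_{\alpha-}\subset\Omega_i$ I would pass to complements, which pins down exactly what is required. This inclusion is equivalent to $U\setminus\Omega_i\subset U\setminus\Omega_{\alpha-}$, and a short computation, using that every point of $\Omega_{\alpha-}$ is at distance at least $\alpha$ from $U\setminus\Omega$, gives $U\setminus\Omega_{\alpha-}=\{x\in U:\dist(x,U\setminus\Omega)<\alpha\}$, i.e.\ the $\alpha$-neighbourhood within $U$ of the complement $U\setminus\Omega$. Thus, for large $i$, the inclusion $\Omega_{\alpha-}\subset\Omega_i$ holds \emph{if and only if} $\sup_{y\in U\setminus\Omega_i}\dist(y,U\setminus\Omega)<\alpha$; that is, the second inclusion is precisely the assertion that the complements $U\setminus\Omega_i$ eventually lie in a shrinking neighbourhood of $U\setminus\Omega$.

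This is where I expect the genuine difficulty to sit. The hypothesis $d_H(\Omega_i,\Omega)\to0$ controls the sets $\Omega_i$ relative to $\Omega$ but says nothing about their complements: a domain may develop a thin interior hole whose centre stays far from $U\setminus\Omega$ while $d_H(\Omega_i,\Omega)\to0$ still holds, and for such a sequence the second inclusion fails. I would therefore secure the bound $\sup_{y\in U\setminus\Omega_i}\dist(y,U\setminus\Omega)\to0$ either by reading $\Omega_i\to\Omega$ as convergence in the two-sided (complementary) Hausdorff sense, where it is immediate, or by invoking boundary regularity that excludes such interior features, for instance the uniform Lipschitz-graph condition introduced above. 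Once this complement-side control is granted, both inclusions drop out at once from the elementary arguments above, so the complement bound is the only nontrivial ingredient.
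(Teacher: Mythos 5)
The paper states Proposition~\ref{prop:alpha+-} without proof, so there is no argument of the authors to compare yours against line by line; judging your proposal on its own terms, your treatment of the first inclusion is exactly the intended one: $\sup_{x\in\Omega_i}\dist(x,\Omega)\le d_H(\Omega_i,\Omega)<\alpha$ for $i$ large, together with $\Omega_i\subset U$, gives $\Omega_i\subset\Omega_{\alpha+}$ at once. You also silently correct a sign error in the definition of $\Omega_{\alpha-}$: as printed it reads $\dist(x,U\setminus\Omega)<\alpha$, but your reading ($\ge\alpha$, the ``$\alpha$-core'' of $\Omega$) is the one consistent with Lemma~\ref{lemma:Minkovskii-dim} and with the use of $\Omega_{\alpha-}$ in the proof of Theorem~\ref{theorem:RK2} (where $x\in\Omega_{\alpha-}$ and $r<\alpha/2$ must give $B(x,r)\subset\Omega_{\alpha/2-}\subset\Omega_i$), so that correction is warranted.

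Your diagnosis of the second inclusion is the substantive point, and it is correct: with $d_H$ as the paper actually defines it --- computed between $\Omega_i$ and $\Omega$ themselves, not between their complements --- the inclusion $\Omega_{\alpha-}\subset\Omega_i$ does not follow. A concrete witness for your ``thin interior hole'': take $\Omega=B_1(0)$ and $\Omega_i=B_1(0)\setminus\overline{B_{1/i}(0)}$, which are domains with $\sup_{x\in\Omega_i}\dist(x,\Omega)=0$ and $\sup_{y\in\Omega}\dist(y,\Omega_i)\le 2/i\to0$, so $d_H(\Omega_i,\Omega)\to0$, yet $0\in\Omega_{\alpha-}$ for every small $\alpha$ while $0\notin\Omega_i$ for any $i$. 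As you observe, $\Omega_{\alpha-}\subset\Omega_i$ is equivalent (up to strict versus non-strict inequality in the supremum) to $U\setminus\Omega_i$ lying in the $\alpha$-neighbourhood of $U\setminus\Omega$, i.e.\ to one half of the Hausdorff convergence of the \emph{complements}; this is the Hausdorff complementary topology standard in shape optimization (cf.\ \cite{bucur2005variational}) and is presumably what the authors intend, since the set-side convergence supplies the first inclusion and the complement-side convergence supplies the second. So your proposal is a correct proof of the first inclusion plus a correct identification of the missing hypothesis for the second; the one place I would push back is the suggestion that the uniform Lipschitz-graph condition is an equally immediate fix --- it does exclude the counterexample, but one would still have to carry out a genuine argument there, whereas strengthening the convergence to the two-sided (complementary) sense makes both inclusions one-line consequences of the definitions, which is how I would repair the statement.
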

  
\begin{lem}\label{lemma:Minkovskii-dim}
Let $\Omega\subset\R^n$ be a bounded domain with 
$\partial \Omega$ of box-counting dimension less than $n$. 
Then for any $\varepsilon>0$ there is $\alpha>0$ so that $|\Omega_{\alpha+} \setminus  \Omega_{\alpha-}| < \varepsilon$.
\end{lem}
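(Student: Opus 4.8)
The plan is to recognise that $\Omega_{\alpha+}\setminus\Omega_{\alpha-}$ is a two-sided tubular neighbourhood of $\partial\Omega$ of width at most $\alpha$, and then to bound the Lebesgue measure of such a tube directly from the covering numbers that define the box-counting dimension. Throughout write $E:=\partial\Omega$, which is compact since $\Omega$ is bounded, and set $T_\alpha:=\{x\in\R^n:\dist(x,E)\le\alpha\}$.

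First I would establish the inclusion $\Omega_{\alpha+}\setminus\Omega_{\alpha-}\subset T_\alpha$, so that $|\Omega_{\alpha+}\setminus\Omega_{\alpha-}|\le|T_\alpha|$. Let $x$ lie in the difference. If $x\notin\Omega$, then $\dist(x,\Omega)<\alpha$ by the definition of $\Omega_{\alpha+}$; since $\Omega$ is open, the nearest point of $\overline\Omega$ to $x$ lies on $E$, so $\dist(x,E)\le\dist(x,\Omega)<\alpha$. If instead $x\in\Omega$, then, as $x$ does not belong to $\Omega_{\alpha-}$, it lies within distance $\alpha$ of $U\setminus\Omega$; choosing a near-optimal $y\in U\setminus\Omega$ and following the segment $[x,y]$, which begins in the open set $\Omega$ and ends outside it, produces a crossing point $z\in E$ with $|x-z|\le|x-y|$, whence again $\dist(x,E)\le\alpha$.

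It then remains to bound $|T_\alpha|$. Setting $d:=\dim_B E<n$, I would fix some $s\in(d,n)$. By the definition of box-counting dimension, for all sufficiently small $\alpha$ the set $E$ can be covered by a number $N(\alpha)\le\alpha^{-s}$ of balls of radius $\alpha$ centred on $E$; since every point of $T_\alpha$ is within $\alpha$ of $E$ and hence within $2\alpha$ of one of these centres, the concentric balls of radius $2\alpha$ cover $T_\alpha$, giving
\[
|T_\alpha|\le N(\alpha)\,\omega_n(2\alpha)^n\le 2^n\omega_n\,\alpha^{\,n-s},
\qquad \omega_n:=|B(0,1)|.
\]
As $n-s>0$, the right-hand side tends to $0$ with $\alpha$, so given $\varepsilon>0$ it is enough to take $\alpha$ small enough that $2^n\omega_n\alpha^{\,n-s}<\varepsilon$; together with the inclusion this yields $|\Omega_{\alpha+}\setminus\Omega_{\alpha-}|<\varepsilon$.

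I expect the only genuinely delicate point to be the geometric inclusion, and within it the interior case: one must use connectedness of the segment to pass from ``close to $U\setminus\Omega$'' to ``close to the true boundary $E$'', taking care to compare $\dist(\cdot,\Omega)$, $\dist(\cdot,U\setminus\Omega)$ and $\dist(\cdot,E)$ in the correct directions. The volume estimate is the standard Minkowski-sausage bound and is routine; I would also note that, since the lemma only claims existence of a single good $\alpha$, the covering estimate $N(\alpha)\le\alpha^{-s}$ is needed merely along a sequence $\alpha\to 0$, so even the lower box-counting dimension being $<n$ would suffice.
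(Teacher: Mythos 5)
Your proof is correct and follows essentially the same route as the paper: cover $\partial\Omega$ by $N(\alpha)$ balls of radius $\alpha$, observe that every point of $\Omega_{\alpha+}\setminus\Omega_{\alpha-}$ lies within $\alpha$ of $\partial\Omega$ so the doubled balls cover it, and conclude $|\Omega_{\alpha+}\setminus\Omega_{\alpha-}|\le N(\alpha)\,2^n\omega_n\alpha^n\to 0$ from the box-counting hypothesis. The only difference is that you spell out the geometric inclusion and the choice of exponent $s\in(d,n)$, which the paper leaves implicit.
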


\begin{proof}
 We cover $\partial \Omega$ with $N(\alpha)$ balls $\{B_\alpha(x_i)\}_i$ of radius $\alpha$. Then, as any point in $\Omega_{\alpha+} \setminus \Omega_{\alpha-}$ is at most of distance $a$ to $\partial \Omega$, the set is covered by $\{B_{2\alpha}(x_i)\}_i$. Now
 \begin{align*}
  |\Omega_{\alpha+} \setminus \Omega_{\alpha-}| \leq \sum_{i=1}^{N(\alpha)} |B_{2\alpha}(x_i)| \leq N(\alpha) 2^n \omega_n \alpha^n \to 0
 \end{align*}
 for $\alpha \to 0$ as per the definition of the box-counting dimension.
\end{proof}

Note that this, in particular, is true for Lipschitz domains.

\begin{thm}[Rellich-Kondrachov type theorem]\label{theorem:RK2} 
Let $(\Omega_i)_{i\in\mathbb N}$ be a $(q,p)$-Poincar\'e sequence of domains and $(\Omega_i)_{i\in\mathbb N}$ converge to a domain $\Omega$ in the Hausdorff-sense.
Suppose that $\partial \Omega$ is of box-counting dimension less than $n$.
Then for any sequence $u_i \in W^{1,p}(\Omega_i)$ such that 
$\sup_i \|u_i\|_{W^{1,p}(\Omega_i)} <\infty$
there exists a subsequence (not relabeled) and a limit $u\in L^q(\Omega)$ such that $\widetilde{u}_i \to \widetilde{u}$ in $L^{\alpha}(\R^n)$
for every $1\leq\alpha < q$.
\end{thm}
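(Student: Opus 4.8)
\section*{Proof proposal}

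The plan is to first reduce the claim to strong convergence in $L^1(\R^n)$ and then establish the latter through the Fréchet--Kolmogorov (Kolmogorov--Riesz) compactness criterion, with the boundary regularity entering only to control a thin layer around $\partial\Omega_i$. I begin by recording a uniform $L^q$-bound. Since $(\Omega_i)$ converges to the domain $\Omega$ in the Hausdorff sense, Proposition \ref{prop:alpha+-} sandwiches $\Omega_{\alpha-}\subset\Omega_i\subset\Omega_{\alpha+}$ for large $i$, so $|\Omega_i|$ is bounded above and below by positive constants; in particular $\sup_i|\Omega_i|^{1/q-1/p}<\infty$, and the uniform Sobolev inequality \eqref{eq:uniformSobolev} gives $\sup_i\|\tilde u_i\|_{L^q(\R^n)}=\sup_i\|u_i\|_{L^q(\Omega_i)}\le C$. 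Granting strong convergence $\tilde u_i\to\tilde u$ in $L^1(\R^n)$ along a subsequence, the full statement follows by interpolation: for $1\le\alpha<q$ and $\theta\in(0,1]$ with $\tfrac1\alpha=\theta+\tfrac{1-\theta}{q}$ one has $\|\tilde u_i-\tilde u\|_{L^\alpha}\le\|\tilde u_i-\tilde u\|_{L^1}^{\theta}\,\|\tilde u_i-\tilde u\|_{L^q}^{1-\theta}$, where the second factor stays bounded. Hence it suffices to prove relative compactness of $(\tilde u_i)$ in $L^1(\R^n)$.

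For the $L^1$-compactness I would verify the hypotheses of the Fréchet--Kolmogorov theorem. Boundedness is immediate from $\|\tilde u_i\|_{L^1}\le|\Omega_i|^{1-1/q}\|\tilde u_i\|_{L^q}\le C$, and tightness is automatic because $\supp\tilde u_i\subset\overline{\Omega_i}\subset\overline{\Omega_{\alpha+}}$ lies in a fixed bounded set for large $i$ (the finitely many remaining terms are individually compactly supported). The essential point is equicontinuity of translations, uniformly in $i$: $\sup_i\|\tilde u_i(\cdot+h)-\tilde u_i\|_{L^1}\to0$ as $h\to0$. To estimate this I split $\R^n$ according to the distance to $\partial\Omega_i$. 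On the deep interior $\{x\in\Omega_i:\dist(x,\partial\Omega_i)>|h|\}$ the whole segment $[x,x+h]$ stays in $\Omega_i$, so the fundamental theorem of calculus gives the standard bound $\int|u_i(x+h)-u_i(x)|\,\d x\le|h|\,\|\nabla u_i\|_{L^1(\Omega_i)}\le|h|\,|\Omega_i|^{1-1/p}\|\nabla u_i\|_{L^p(\Omega_i)}\le C|h|$; on the far exterior $\{\dist(x,\Omega_i)>|h|\}$ both $\tilde u_i(x)$ and $\tilde u_i(x+h)$ vanish. What remains is the boundary layer $B_i^{|h|}:=\{x:\dist(x,\partial\Omega_i)\le|h|\}$, where I only use the crude estimate $|\tilde u_i(x+h)-\tilde u_i(x)|\le|\tilde u_i(x+h)|+|\tilde u_i(x)|$ together with Hölder and the uniform $L^q$-bound to obtain a contribution $\le 2C\,|B_i^{|h|}|^{\,1-1/q}$.

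The main obstacle, and the only place where the changing domains genuinely interact with the boundary regularity, is the uniform bound $\sup_i|B_i^{|h|}|\to0$ as $h\to0$. Here I combine the Hausdorff sandwiching with the thinness of the boundary: given $\varepsilon>0$, Lemma \ref{lemma:Minkovskii-dim} (using that $\partial\Omega$ has box-counting dimension $<n$) provides $\beta>0$ with $|\Omega_{\beta+}\setminus\Omega_{\beta-}|<\varepsilon$; choosing $\alpha,|h|$ with $\alpha+|h|\le\beta$ and $i>N(\alpha)$ so that $\Omega_{\alpha-}\subset\Omega_i\subset\Omega_{\alpha+}$ forces $\partial\Omega_i$, and hence its $|h|$-neighborhood $B_i^{|h|}$, to be confined to the $\beta$-shell $\Omega_{\beta+}\setminus\Omega_{\beta-}$, whence $|B_i^{|h|}|<\varepsilon$. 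The finitely many indices $i\le N(\alpha)$ are handled individually by continuity of translation in $L^1$. This uniform layer control is exactly what fails in Remark \ref{rem:nonUniformLipschitz}, where compactness genuinely breaks down, so one should not expect to avoid the dimensional hypothesis on $\partial\Omega$.

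With the three criteria verified, Fréchet--Kolmogorov yields a subsequence with $\tilde u_i\to g$ in $L^1(\R^n)$. Finally I identify the limit exactly as in Proposition \ref{lem:supp}: one gets $\supp g\subset\overline\Omega$, and since box-counting dimension $<n$ implies $|\partial\Omega|=0$, I may set $u:=g|_\Omega$ with $\tilde u=g$. Passing to an a.e.-convergent subsequence and applying Fatou gives $\|u\|_{L^q(\Omega)}\le\liminf_i\|u_i\|_{L^q(\Omega_i)}<\infty$, so $u\in L^q(\Omega)$, and the interpolation step from the first paragraph upgrades $\tilde u_i\to\tilde u$ to convergence in $L^\alpha(\R^n)$ for every $1\le\alpha<q$.
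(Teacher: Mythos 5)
Your proof is correct, and it takes a genuinely different route from the one in the paper. The two arguments share the same skeleton at the start and the end: the $(q,p)$-Poincar\'e hypothesis enters only through the uniform bound $\sup_i\|\tilde u_i\|_{L^q(\R^n)}<\infty$ from \eqref{eq:uniformSobolev} (you even verify $\sup_i|\Omega_i|^{1/q-1/p}<\infty$ via the sandwich of Proposition \ref{prop:alpha+-}, which the paper leaves implicit), the box-counting hypothesis enters only through the thin shell $\Omega_{\alpha+}\setminus\Omega_{\alpha-}$ of Lemma \ref{lemma:Minkovskii-dim}, and the final upgrade to $L^\alpha$-convergence for $\alpha<q$ is an interpolation against the uniform $L^q$-bound (your explicit H\"older interpolation is essentially the content of Lemma \ref{lemma:8.2}). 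The difference is the middle step. The paper extracts a weak $L^q$-limit and proves convergence \emph{in measure} on the inner region $\Omega_{\alpha-}$ by a telescoping ball-average argument: the ball Poincar\'e inequality \eqref{eq:PI} controls $|u_i-(u_i)_{B(\cdot,r)}|$ by a maximal function of $|\nabla u_i|^p$, weak convergence handles $(u_i)_{B(\cdot,r)}-u_{B(\cdot,r)}$, and Banach--Steinhaus handles $u-u_{B(\cdot,r)}$. You instead prove relative compactness of $(\tilde u_i)$ in $L^1(\R^n)$ directly via Fr\'echet--Kolmogorov, splitting the translation estimate into a deep interior (fundamental theorem of calculus along segments, costing $|h|\,\|\nabla u_i\|_{L^1(\Omega_i)}\le C|h|$), a far exterior where both terms vanish, and the boundary layer $B_i^{|h|}$, whose measure you control \emph{uniformly in $i$} by confining $\partial\Omega_i$ to the $\beta$-shell of $\partial\Omega$ — this containment is the one genuinely new geometric point of your argument, and it is sound (up to routine strict-versus-nonstrict inequality bookkeeping, e.g.\ taking $\alpha+|h|<\beta$). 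Your route is arguably more elementary and closer to the classical Riesz--Kolmogorov proof of Rellich--Kondrachov: it avoids maximal functions entirely and does not invoke \eqref{eq:PI} with the exponent $p^*=np/(n-p)$, which as stated requires $p<n$. What the paper's approach buys in exchange is that it identifies the strong limit directly with the weak limit already produced by the Banach--Alaoglu theorem, and the convergence-in-measure mechanism is the one that transfers to the metric-measure-space setting of the cited Haj\l asz--Koskela framework.
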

%

We use the following
\begin{lem}[{\cite[Lemma 8.2]{HK2000}}]\label{lemma:8.2}
Let $Y$ be a set equipped with a finite measure $\nu$. 
Let $1 < q < \infty$ and $(v_i)_{i \in \N}$ be a bounded sequence in $L^q(Y)$. 
If $v_i$ converges in measure to $v \in L_q(Y)$, then $v_i$ converges to $v$ in the norm of $L^{\alpha}(Y)$ for every $1\leq \alpha < q$.
\end{lem}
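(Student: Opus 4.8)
\textit{Proof plan.} The plan is to reduce the statement to a Vitali-type uniform integrability argument. Set $w_i := v_i - v$. Since $v \in L^q(Y)$ and $(v_i)_{i \in \N}$ is bounded in $L^q(Y)$, the sequence $(w_i)_{i \in \N}$ is bounded in $L^q(Y)$, say $\|w_i\|_{L^q(Y)} \le M$ for all $i$, and $w_i \to 0$ in measure. It then suffices to show that $\int_Y |w_i|^\alpha \d\nu \to 0$ for each fixed $\alpha \in [1,q)$.

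The key observation is that the higher integrability furnished by the $L^q$-bound makes the family $\{|w_i|^\alpha\}_{i \in \N}$ uniformly integrable. Concretely, for any measurable $E \subset Y$, Hölder's inequality with exponents $q/\alpha$ and $q/(q-\alpha)$ yields
\[
\int_E |w_i|^\alpha \d\nu \le \Big(\int_E |w_i|^q \d\nu\Big)^{\alpha/q} \nu(E)^{1-\alpha/q} \le M^\alpha \, \nu(E)^{1-\alpha/q}.
\]
Because $1 - \alpha/q > 0$ and $\nu(Y) < \infty$, the right-hand side tends to $0$ as $\nu(E) \to 0$, uniformly in $i$. This is the crucial ingredient, and in my view the only place where both hypotheses are genuinely needed: convergence in measure alone cannot rule out concentration of the $L^\alpha$-mass, and it is precisely this uniform integrability, coming from $q$ being strictly above the target exponent $\alpha$, that prevents it.

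With this in hand I would fix $\varepsilon > 0$ and split $Y$ according to the size of $w_i$. For $\delta > 0$ set $E_i^\delta := \{y \in Y : |w_i(y)| > \delta\}$. On the complement $Y \setminus E_i^\delta$ one has the pointwise bound $|w_i| \le \delta$, so
\[
\int_{Y \setminus E_i^\delta} |w_i|^\alpha \d\nu \le \delta^\alpha \, \nu(Y),
\]
which is made smaller than $\varepsilon/2$ by choosing $\delta$ small (using $\nu(Y) < \infty$ once more). For the remaining part, convergence in measure gives $\nu(E_i^\delta) \to 0$ as $i \to \infty$ for this fixed $\delta$, so by the uniform-integrability estimate $\int_{E_i^\delta} |w_i|^\alpha \d\nu \le M^\alpha \, \nu(E_i^\delta)^{1-\alpha/q} < \varepsilon/2$ for all $i$ large enough. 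Adding the two contributions gives $\int_Y |w_i|^\alpha \d\nu < \varepsilon$ for all sufficiently large $i$, i.e.\ $v_i \to v$ in $L^\alpha(Y)$.

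Since there is no serious obstruction here, the only ``hard part'' is conceptual: recognizing that a bound in $L^q$ with $q$ strictly larger than the target exponent upgrades convergence in measure to strong $L^\alpha$-convergence, which is exactly the setting of Vitali's convergence theorem. One could alternatively invoke Vitali directly, but the two-line splitting above keeps the argument self-contained.
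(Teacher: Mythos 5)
Your proof is correct: the H\"older estimate $\int_E |w_i|^\alpha \d\nu \le M^\alpha\,\nu(E)^{1-\alpha/q}$ gives uniform integrability of $\{|w_i|^\alpha\}$, and the splitting along $E_i^\delta=\{|w_i|>\delta\}$ combined with convergence in measure closes the argument; note that $1-\alpha/q>0$ holds for all $\alpha\in[1,q)$ precisely because $q>1$, so the endpoint $\alpha=1$ is covered. The paper itself gives no proof of this lemma --- it is quoted verbatim from \cite[Lemma 8.2]{HK2000} --- and your argument is the standard Vitali-type one used there, so there is nothing to reconcile.
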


\begin{proof}[Proof of Rellich-Kondrachov theorem]
By the uniform Sobolev inequality, the sequence $\tilde{u}_i$ is bounded in $L^{q}(\R^n)$. 
Then, there exists a subsequence (not relabeled) that converges weakly to some $u\in L^q(\mathbb R^n)$.
Moreover,  by the same arguments as before, $u$ is supported on $\Omega$.
We can assume that all $\Omega_i$ and $\Omega$ are in some set $U \subset\mathbb R^n$ of finite measure. 
So, in order to apply Lemma \ref{lemma:8.2}, we have to prove that  $\tilde{u}_i$ converges to $u$ in measure, i.e. for any $\varepsilon>0$
\[
\mathcal L(\varepsilon, i, U) = |\{x\in U : |\tilde u_i(x) - u(x)| >\varepsilon \} | \to 0 \quad \text{ as } i\to\infty.
\]

Now fix $\varepsilon>0$. We have
\[
\mathcal L(\varepsilon, i, U) =\mathcal L(\varepsilon, i, U\setminus \Omega_{\alpha+}) 
+ \mathcal L(\varepsilon, i, \Omega_{\alpha+} \setminus  \Omega_{\alpha-}) 
+ \mathcal L(\varepsilon, i, \Omega_{\alpha-}).
\]
Then by Lemma \ref{lemma:Minkovskii-dim} for any $\varepsilon'>0$ we find $\alpha>0$ so that
$ \mathcal L(\varepsilon, i, \Omega_{\alpha+} \setminus  \Omega_{\alpha-}) < \varepsilon'$ (for any $i\in\N$). 
 In turn by Proposition \ref{prop:alpha+-} there is such an $N\in\N$ that for all $i>N$ the functions $\tilde{u}_i$ are supported in $\Omega_{\alpha+}$,
which implies $ \mathcal L(\varepsilon, i, U\setminus \Omega_{\alpha+}) = 0$.

Thus, the third term remains to be estimated. 
Again, due to Proposition \ref{prop:alpha+-}, we can assume that $\Omega_i\supset \Omega_{{\alpha}/{2}-}$ for $i>N$.
For $r<\alpha/2$ and $x\in\Omega_{\alpha-}$ we have $B(x,r)\subset\Omega_{\alpha/2-}$.
Therefore, we can consider  
\[
{(u_i)}_{B(x,r)} = \fint_{B(x,r)} u_i(x) \ \d x \quad \text{ and }\quad {u}_{B(x,r)} = \fint_{B(x,r)} u(x) \ \d x.
\]

Then  $\mathcal L(\varepsilon, i, \Omega_{\alpha-}) \leq |A_1(i,r)| + |A_2(i,r)| + |A_3(r)|$,
where 
\begin{align*}
A_1(i,r) &= \{x \in \Omega_{\alpha-}  :  |u_i(x) - {(u_i)}_{B(x,r)}| \geq \varepsilon/3\};\\
A_2(i,r) &= \{x \in \Omega_{\alpha-}  :   |(u_i)_{B(x,r)} -  u_{B(x,r)}| \geq \varepsilon/3\};\\
A_3(r) &= \{x \in \Omega_{\alpha-}  :  |u(x) -  u_{B(x,r)} | \geq \varepsilon/3\}.
\end{align*}

1) Let $x\in\Omega_{\alpha-}$ be a common Lebesgue point for all $u_i$. 
Denote $r_j = 2^{-j}r$. So we have $\lim_{j\to\infty} (u_i)_{B(x, r_j)} = u_i(x)$.
Using the H\"older inequality and \eqref{eq:PI}, we derive an estimate via the maximal function
\begin{align*}
&\phantom{{}={}}\|u_i(x) - (u_i)_{B(x,r)}|
\leq \sum_{j=0}^{\infty} |(u_i)_{B(x,r_j)} - (u_i)_{B(x,r_{j+1})}|\\
&\leq \sum_{j=0}^{\infty} \fint_{B(x,r_{j+1})}   |u_i(y) - (u_i)_{B(x,r_j)}| \ \textrm{d}y
\leq 2^{n}\sum_{j=0}^{\infty} \fint_{B(x,r_j)}   |u_i(y) - (u_i)_{B(y, r_j)}| \ \textrm{d}y\\
&\leq 2^{n}\sum_{j=0}^{\infty}  \bigg( \fint_{B(x,r_j)}   |u_i(y) - (u_i)_{B(y, r_j)}|^{p^*} \ \textrm{d}y\bigg)^{1/p^*}\\
&\leq 2^{n}\sum_{j=0}^{\infty}  C_P 2^{-j}r \bigg( \fint_{B(x, r_j)}   |\nabla u_i(y)|^{p} \ \textrm{d}y\bigg)^{1/p}
\leq Cr\big( M |\nabla u_i|^{p}(x)\big)^{1/p}.
\end{align*}
From the boundedness of the maximal operator, we conclude 
\begin{align*}
|\{x \in \Omega_{\alpha-}  :  |u_i(x) - {(u_i)}_{B(x,r)}| \geq\varepsilon/3\}|
\leq |\{x \in \Omega_{\alpha-}  : M |\nabla u_i|^{p}(x) \geq (1/C)^pr^{-1} (\varepsilon/3)^p \}|\\
\leq\frac{C_1 r^{p}}{(1/C)^p (\varepsilon/3)^p} \| |\nabla u_i|^{p} \|_{L^1(\Omega_{\alpha-})} \to 0 \quad \text{ as } r\to 0
\end{align*}
uniformly by $i$.

2) By previous $\tilde{u}_i$ converges weakly to $u$ in $L^q(\mathbb R^n)$. In particular, $(u_i)_{B(x,r)} \to  u_{B(x,r)}$ for any $x$ and $r$. 
Thus, for any $r>0$ we have $|A_2(i,r)| \to 0$ as $i\to\infty$.

3) We have that $u_i - (u_i)_{B(\cdot,r)}$ converges weakly to $u - u_{B(\cdot,r)}$ in $L^p(\Omega_{\alpha-})$.
Then, considering $u_i - (u_i)_{B(\cdot,r)}$  as a sequence of bounded linear operators on $L^{q'}(\Omega_{\alpha-})$ and applying the Banach-Steinhaus theorem
we obtain
\[
\|u - u_{B(\cdot,r)} \|_{L^q(\Omega_{\alpha-})} \leq \sup_i \|u_i - (u_i)_{B(\cdot,r)} \|_{L^q(\Omega_{\alpha-})} .
\]
Then, using the same estimate as in 1), we conclude $\|u - u_{B(\cdot,r)} \|_{L^q(\Omega_{\alpha-})} \to 0$ when $r\to 0$.

The theorem follows.
\end{proof}

\section{Boundary values}

In many applications, an important but often neglected part of the problem consists of taking the right boundary conditions. For fixed domains these boundary related difficulties are often not immediately evident, as the arguments used there are quite standard and one simply refers to standard textbook-results, guaranteeing e.g.\ the existence and continuity of the trace operator for Lipschitz-domains.

In variable domains in contrast, the situation gets much more interesting. As the domain changes, so does the boundary. Thus it is a-priori not even clear what continuity of the trace operator even means. As a consequence, we postpone a detailed general study of boundary conditions to a future work. However, as the topic is so important for applications, in this section, we will give some general useful results, as well as results on some special cases that already suffice for many problems.

\subsection{Trace operators for parametrized boundaries}

As will be seen when discussing ALE-convergence, perhaps the easiest situation for changing domains is when there is some sort of natural parametrization of the domain by a map from some given reference domain. As was also mentioned this is not the case for most problems. 

That being said, while a reference domain is a rare occurrence, there are many problems which involve a reference boundary, i.e.\ where at least locally the changing boundary has a natural reference configuration. A classic example for this is fluid-structure interaction, where the boundary of the changing fluid domain is given by a solid, which in turn is modeled by a deformation map from a given reference configuration.

\begin{thm}[Boundary parametrization] \label{thm:boundaryParam}
 Let $(\Omega_i)_{i\in\mathbb N}$ be a sequence of domains that converges to a domain $\Omega$ in the Hausdorff-sense and $(\Omega_i)_{i \in \N}$ satisfy the uniform Lipschitz-graph condition. Assume that there is an $n-1$ dimensional manifold $\omega$ and a sequence of diffeomorphisms $\eta_i : \omega \to \partial \Omega_i$ and similarly $\eta: \omega \to \partial \Omega$. Assume also that $\eta_i \to \eta$ in $W^{1,\infty}(\omega;\R^n)$.
 Then for any converging sequence $(u_i)_{i \in \N}$, $u_i \in W^{k,p}(\Omega_i)$, $u_i \to u$ in the zero-extension sense, $u \in W^{1,p}(\Omega)$, we have
 \begin{align*}
  u_i \circ \eta_i \to u \circ \eta \text{ in } L^p(\omega;\R^n).
 \end{align*}
 where $u \in L^p(\partial \Omega_i)$ is understood in the trace sense.
\end{thm}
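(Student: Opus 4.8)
The plan is to prove the trace convergence by a density argument: reduce to a single fixed smooth function and control the error by a \emph{uniform} trace inequality. Three geometric facts underlie everything. First, the uniform Lipschitz-graph condition yields a trace inequality $\|u\|_{L^p(\partial\Omega_i)} \leq C\|u\|_{W^{1,p}(\Omega_i)}$ with $C$ independent of $i$: the classical trace estimate (integration along lines in the directions $\nu_k$) only sees the uniformly bounded Lipschitz constants, the common locally finite cover $(U_k)$, and a subordinate partition of unity, so running it simultaneously for the whole sequence produces one constant. Second, since all boundaries are uniformly Lipschitz, $\partial\Omega$ has box-counting dimension $n-1<n$, so Lemma \ref{lemma:Minkovskii-dim} together with Proposition \ref{prop:alpha+-} gives $\Omega_i\triangle\Omega\subset\Omega_{\alpha+}\setminus\Omega_{\alpha-}$ for large $i$, hence $|\Omega_i\triangle\Omega|\to 0$, i.e.\ $\chi_{\Omega_i}\to\chi_\Omega$ in $L^p(\R^n)$. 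Third, as $\eta$ is a diffeomorphism of the compact manifold $\omega$ and $\eta_i\to\eta$ in $W^{1,\infty}$, the tangential Jacobians $J\eta_i$ converge uniformly to $J\eta$ and are therefore uniformly bounded above and bounded below away from $0$ for large $i$; this lets me pass between $\|f\circ\eta_i\|_{L^p(\omega)}$ and $\|f\|_{L^p(\partial\Omega_i)}$ up to uniform constants. Since $W^{k,p}\hookrightarrow W^{1,p}$ and zero-extension convergence in $W^{k,p}$ implies it in $W^{1,p}$, I work at the level $k=1$ throughout.

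Now fix $\varepsilon>0$. As $\Omega$ is Lipschitz, I extend $u$ to $\R^n$ and mollify to obtain $\phi\in C^\infty(\R^n)$ with $\|\phi-u\|_{W^{1,p}(\Omega)}<\varepsilon$. This $\phi$ restricts to every $\Omega_i$, and I claim $\phi|_{\Omega_i}\to\phi|_\Omega$ in the zero-extension sense. Indeed, from
\begin{equation*}
 \|u_i-\phi\|_{W^{1,p}(\Omega_i)}^p = \|\tilde u_i-\phi\,\chi_{\Omega_i}\|_{L^p(\R^n)}^p+\sum_{j=1}^{n}\|(\partial_j u_i)^\sim-(\partial_j\phi)\,\chi_{\Omega_i}\|_{L^p(\R^n)}^p,
\end{equation*}
the boundedness of $\phi,\nabla\phi$ on a fixed neighbourhood of $\Omega$, the convergence $\chi_{\Omega_i}\to\chi_\Omega$, and the zero-extension convergence $\tilde u_i\to\tilde u$, $(\partial_j u_i)^\sim\to(\partial_j u)^\sim$ show that each extended term converges in $L^p(\R^n)$ to $(u-\phi)^\sim$, resp.\ $(\partial_j(u-\phi))^\sim$. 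Hence
\begin{equation*}
 \lim_{i\to\infty}\|u_i-\phi\|_{W^{1,p}(\Omega_i)}=\|u-\phi\|_{W^{1,p}(\Omega)}<\varepsilon.
\end{equation*}

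For the smooth function the trace is just restriction, so $(\phi|_{\Omega_i})\circ\eta_i=\phi\circ\eta_i$ and $(\phi|_\Omega)\circ\eta=\phi\circ\eta$; since $\eta_i\to\eta$ uniformly and $\phi$ is Lipschitz on the relevant compact set, $\phi\circ\eta_i\to\phi\circ\eta$ uniformly, hence in $L^p(\omega)$. Using the Jacobian comparison and the uniform trace inequality, I estimate
\begin{align*}
 \|u_i\circ\eta_i-u\circ\eta\|_{L^p(\omega)}
 &\leq \|(u_i-\phi)\circ\eta_i\|_{L^p(\omega)}+\|\phi\circ\eta_i-\phi\circ\eta\|_{L^p(\omega)}+\|(\phi-u)\circ\eta\|_{L^p(\omega)}\\
 &\leq C\|u_i-\phi\|_{W^{1,p}(\Omega_i)}+\|\phi\circ\eta_i-\phi\circ\eta\|_{L^p(\omega)}+C\|\phi-u\|_{W^{1,p}(\Omega)}.
\end{align*}
Letting $i\to\infty$, the middle term vanishes and the outer two are bounded by $C\varepsilon$, so $\limsup_{i\to\infty}\|u_i\circ\eta_i-u\circ\eta\|_{L^p(\omega)}\leq 2C\varepsilon$; as $\varepsilon$ was arbitrary, the left-hand side is $0$.

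I expect the main obstacle to be the first ingredient: verifying that the trace constant is genuinely uniform in $i$. This requires running the classical graph-chart trace estimate simultaneously for the whole sequence and checking that the resulting constant depends only on the uniform Lipschitz bound, on the fixed cover $(U_k)$, and on a common subordinate partition of unity — using that the domains are bounded and the cover locally finite, so only finitely many charts are relevant. A secondary point needing care is the lower bound $J\eta_i\geq c>0$, which makes the pullback norms comparable and relies essentially on $\eta$ being a genuine (nondegenerate) diffeomorphism together with $\eta_i\to\eta$ in $W^{1,\infty}$.
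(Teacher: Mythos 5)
Your argument is correct, but it is genuinely different from the one in the paper. The paper smooths the $u_i$ on their own domains (via Lemma \ref{lem:smoothApprox}), localizes to a single graph chart, and then estimates $|u_i\circ\eta_i(x)-u\circ\eta(x)|$ directly by the fundamental theorem of calculus along the graph direction: the mismatch between the integration endpoints $h_i(x)$ and $h(x)$ is exactly absorbed by replacing $\partial_n u_i$ and $\partial_n u$ with their zero-extensions, so the hypothesis $u_i\xrightarrow{ZE}u$ enters head-on through $\|(\partial_n u_i)^\sim-(\partial_n u)^\sim\|_{L^1}$; the case $p>1$ is then obtained by running the same estimate for $|u_i|^p$. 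You instead approximate the single limit $u$ by a globally smooth $\phi$ (using that the Lipschitz domain $\Omega$ admits an extension), observe that restrictions of a fixed smooth function to the $\Omega_i$ converge in the zero-extension sense because $|\Omega_i\triangle\Omega|\to 0$, handle $\phi\circ\eta_i\to\phi\circ\eta$ by uniform convergence of the parametrizations, and transfer the error $\|u_i-\phi\|_{W^{1,p}(\Omega_i)}$ to the boundary via a uniform trace constant. This is a clean, soft, operator-norm-plus-density argument that treats all $p$ at once and avoids the slightly delicate $|u_i|^p$ bootstrap; its price is that the uniform trace inequality must be established first. Note that the paper obtains that inequality as a \emph{corollary} of this theorem, so you cannot cite it, and proving it independently amounts to exactly the line-integration computation in graph charts that the paper performs inside the proof — you have relocated the hard step rather than eliminated it. You correctly flag this as the main obstacle, and your sketch of why the constant is uniform (uniform Lipschitz bounds, a fixed finite subcover for all large $i$, a common partition of unity) is the right one; written out in full, your route is a valid and somewhat more modular alternative.
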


\begin{proof} 
 First of all note that if $\Omega_i = \Omega$ is a fixed Lipschitz domain and only the $\eta_i$ vary, then this is a direct consequence of the trace theorem and the change-of-variables formula for Lipschitz functions. Using this together with a cutoff, we can restrict ourselves to a local graph setting, where furthermore the boundary parametrization is of the form $\eta_i: \omega\subset \R^{n-1} \to \R^n; x\mapsto (x,h_i(x))$. 
Furthermore by Lemma \ref{lem:smoothApprox} and a density argument, we can assume $u$ and the $u_i$ to be smooth.
 
 Next, looking more closely at the standard proof of the trace theorem, we can estimate
 \begin{align*}
  |u_i \circ \eta_i (x) - u \circ \eta(x)| &= \left|\int_{-\infty}^{h_i(x)} \partial_n u_i(x,s) ds - \int_{-\infty}^{h(x)} \partial_n u(x,s) ds \right|\\
  &\leq \int_{-\infty}^{\infty} | (\partial_n u_i)^\sim(x,s) - (\partial_n u)^\sim(x,s)| ds.
 \end{align*}
 Integrating this over $x\in \R^{n-1}$ shows the result for $p=1$. Applying this again for $v_i := |u_i|^p$ and estimating the last term again using the chain rule and H\"older's inequality proves it for $p>1$.
\end{proof}

Note that this also immediately implies the following corollary.
\begin{cor}
 Let $(\Omega_i)_{i\in\mathbb N}$ be a sequence of domains that converges to a domain $\Omega$ in the Hausdorff-sense and $(\Omega_i)_{i \in \N}$ satisfy the uniform Lipschitz-graph condition. Then for any $i$ and $u \in W^{1,p}(\Omega_i)$ we have
 \begin{align*}
  \| u \|_{L^p(\partial \Omega_i)} \leq C \| u\|_{W^{1,p}(\Omega_i)}
 \end{align*}
 where the constant $C$ does not depend on $i$.
\end{cor}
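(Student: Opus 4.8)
The plan is to read the statement as a \emph{uniform} trace inequality and to prove it by the same localization-plus-graph estimate that underlies Theorem \ref{thm:boundaryParam}, the only genuinely new point being that every constant produced along the way must be traced back to the uniformly bounded Lipschitz data. First I would fix, once and for all, a partition of unity $(\zeta_k)_{k\in\N}$ subordinate to the common cover $(U_k)_{k\in\N}$ supplied by the uniform Lipschitz-graph condition; crucially, this family is chosen independently of $i$. Since all $\Omega_i$ (for $i$ large) together with $\Omega$ lie in a fixed bounded region and the cover is locally finite, only finitely many charts $U_k$ are ever active, so the overlap number $M := \sup_x \#\{k : x \in U_k\}$ is finite and $i$-independent. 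The finitely many remaining small indices $i$ correspond to fixed Lipschitz domains and are absorbed by enlarging the constant.

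Next I would establish the local estimate in a single chart. By the standard density of $C^\infty(\overline{\Omega_i})$ in $W^{1,p}(\Omega_i)$ (each $\Omega_i$ being Lipschitz) and continuity of the trace, it suffices to prove the bound for smooth $u$. In the coordinates of $U_k$ the set $\Omega_i \cap U_k$ is the subgraph $\{x_n < h_{i,k}(x')\}$ of a Lipschitz function with $\mathrm{Lip}(h_{i,k}) \leq L$ uniformly. Writing the boundary trace of $\zeta_k u$ via the fundamental theorem of calculus exactly as in the proof of Theorem \ref{thm:boundaryParam},
\[
 (\zeta_k u)(x',h_{i,k}(x')) = \int_{-\infty}^{h_{i,k}(x')} \partial_n(\zeta_k u)(x',s)\,\d s,
\]
and estimating $\partial_n(\zeta_k u)$ by $|\partial_n \zeta_k|\,|u| + |\nabla u|$ produces both a zeroth- and a first-order term, hence the full $W^{1,p}$ norm. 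Bounding the surface measure on the graph by the area factor $\sqrt{1+L^2}$ and treating $p>1$ through the substitution $v = |u|^p$ together with the chain rule and H\"older's inequality --- again mirroring the final line of the proof of Theorem \ref{thm:boundaryParam} --- yields
\[
 \int_{\partial\Omega_i \cap U_k} \zeta_k |u|^p \,\d\mathcal H^{n-1} \leq C(L,p,n)\,\|u\|_{W^{1,p}(\Omega_i \cap U_k)}^p,
\]
with a constant depending \emph{only} on the uniform Lipschitz bound, $p$ and $n$.

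Finally I would patch the local estimates together. Since $\sum_k \zeta_k \equiv 1$ on $\partial\Omega_i$, summing the previous display over $k$ and invoking the uniform overlap bound $M$ gives
\[
 \|u\|_{L^p(\partial\Omega_i)}^p = \sum_k \int_{\partial\Omega_i} \zeta_k |u|^p \,\d\mathcal H^{n-1} \leq C(L,p,n)\sum_k \|u\|_{W^{1,p}(\Omega_i \cap U_k)}^p \leq C(L,p,n)\,M\,\|u\|_{W^{1,p}(\Omega_i)}^p,
\]
which is the claim with $C = \big(C(L,p,n)\,M\big)^{1/p}$ independent of $i$. The main obstacle is precisely this last piece of bookkeeping: one must ensure that the cover, the partition of unity and the overlap multiplicity are all genuinely independent of $i$. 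This is exactly what the uniform Lipschitz-graph condition (as opposed to a merely $i$-by-$i$ Lipschitz hypothesis, cf. Remark \ref{rem:nonUniformLipschitz}) is designed to guarantee.
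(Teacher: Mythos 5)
Your proof is correct and follows essentially the same route as the paper, which simply notes that the corollary is an immediate consequence of the estimate in the proof of Theorem \ref{thm:boundaryParam} (localization to uniform Lipschitz graphs, the fundamental-theorem-of-calculus bound on $\partial_n$, and the substitution $v=|u|^p$ with H\"older for $p>1$). You merely make explicit the $i$-independent bookkeeping (common cover, partition of unity, overlap multiplicity, area factor) that the paper leaves implicit.
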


\subsection{Zero boundary values}

The results from the previous subsection in particular imply that under the above conditions, a limit of $W_0^{k,p}(\Omega_i)$-functions will be in $W_0^{k,p}(\Omega)$. However in that case, there is no need for a parametrization as the zero-extension is in fact the natural extension. Specifically for any $u \in C_c^\infty(\Omega)$, we have that $u^\sim \in C_c^\infty(\R^n)$. Furthermore, this is trivially continuous in the $W^{1,p}$-norm and thus $\cdot^\sim : W^{1,p}_0(\Omega) \to W^{1,p}(\R^n)$ is an isometric embedding. 

\begin{thm}
 Let $(\Omega_i)_{i \in \N}$ be a sequence of bounded domains that converges to a domain $\Omega$ in the Hausdorff-sense. Then for any sequence $u_i \in W_0^{1,p}(\Omega_i)$ such that $u_i \to u \in W^{1,p}(\Omega)$ in the zero extension sense we have that $u \in W^{1,p}_0(\Omega)$.
\end{thm}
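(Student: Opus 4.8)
The plan is to reduce the statement to the single special feature of $W_0^{1,p}$ that was isolated just before it: on $W_0^{1,p}$ the zero-extension is a genuine Sobolev function and commutes with differentiation. First I would record that for $v\in W_0^{1,p}(\Omega_i)$, approximating $v$ by $\varphi^m\in C_c^\infty(\Omega_i)$ in $W^{1,p}(\Omega_i)$ gives $\tilde\varphi^m=\varphi^m\in C_c^\infty(\R^n)$ with $\partial_j\tilde\varphi^m=(\partial_j\varphi^m)^\sim$, and passing to the $L^p(\R^n)$-limit identifies $\tilde v\in W^{1,p}(\R^n)$ with $\partial_j\tilde v=(\partial_j v)^\sim$. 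Hence $(\partial_j u_i)^\sim=\partial_j\tilde u_i$, and the hypothesis $u_i\xrightarrow{ZE}u$ says precisely that $\tilde u_i\to\tilde u$ in $L^p(\R^n)$ while $\partial_j\tilde u_i=(\partial_j u_i)^\sim\to(\partial_j u)^\sim$ in $L^p(\R^n)$ for every $j$. By closedness of the weak gradient this already yields $\tilde u\in W^{1,p}(\R^n)$ with $\partial_j\tilde u=(\partial_j u)^\sim$, i.e.\ $\tilde u_i\to\tilde u$ \emph{strongly} in $W^{1,p}(\R^n)$; and $\tilde u=0$ a.e.\ on $\R^n\setminus\Omega$ by definition, with $\supp\tilde u\subset\overline\Omega$ by Proposition~\ref{lem:supp}.

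Next I would conclude membership in $W_0^{1,p}(\Omega)$ from the fact that $\tilde u\in W^{1,p}(\R^n)$ vanishes a.e.\ outside $\Omega$. When $\Omega$ carries boundary regularity — e.g.\ the uniform Lipschitz-graph condition, under which $\Omega$ is itself Lipschitz — this is the classical \emph{translate-and-mollify} argument: after a partition of unity flattening $\partial\Omega$ locally, translate $\tilde u$ inward by $\delta$ along the direction transversal to each Lipschitz graph to obtain functions supported compactly inside $\Omega$; since $\tilde u$ and all $\partial_j\tilde u$ lie in $L^p(\R^n)$, continuity of translation in $L^p$ gives convergence of the translates to $\tilde u$ in $W^{1,p}(\R^n)$, and a final mollification produces an approximating sequence in $C_c^\infty(\Omega)$. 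Thus $u=\tilde u|_\Omega\in W_0^{1,p}(\Omega)$.

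The hard part will be exactly this last implication, which is in fact \emph{false} for arbitrary domains and therefore forces some regularity on $\Omega$. Indeed, with the constant sequence $\Omega_i\equiv B$ and $\Omega=B\setminus S$, where $S$ is a relatively closed piece of a hypersurface of positive $p$-capacity (so that $d_H(B,B\setminus S)=0$), together with $u_i\equiv u_0$ a fixed bump not vanishing on $S$, one obtains $\tilde u\in W^{1,p}(\R^n)$ yet $u\notin W_0^{1,p}(\Omega)$. The correct general formulation is capacitary, namely $W_0^{1,p}(\Omega)=\{w\in W^{1,p}(\R^n):\tilde w=0\ p\text{-q.e.\ on }\R^n\setminus\Omega\}$; strong convergence gives, along a subsequence, $p$-q.e.\ convergence of the quasicontinuous representatives, and each $\tilde u_i$ vanishes $p$-q.e.\ on $\R^n\setminus\Omega_i$, so the crux is to transfer this $p$-q.e.\ vanishing from the complements $\R^n\setminus\Omega_i$ to $\R^n\setminus\Omega$ across the limit. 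On $\R^n\setminus\overline\Omega$ this is immediate from Hausdorff convergence (eventually $x\notin\Omega_i$), so all the difficulty concentrates on $\partial\Omega$, where one needs the complements to converge in capacity — a property guaranteed by boundary regularity such as the uniform Lipschitz-graph condition, but not by Hausdorff convergence alone.
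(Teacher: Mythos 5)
Your argument is correct, and it takes a genuinely different route from the paper. The paper smooths the $u_i$, multiplies by a cutoff $\psi_\varepsilon$ vanishing on an $\varepsilon$-neighborhood of $\partial\Omega$, asserts $\|\psi_\varepsilon u_i - u_i\|_{W^{1,p}(\Omega_i)} \leq C(\varepsilon) \to 0$, and concludes by a diagonal argument. You instead exploit the isometric embedding $(\cdot)^\sim\colon W_0^{1,p}(\Omega_i)\to W^{1,p}(\R^n)$ to upgrade zero-extension convergence to genuine strong convergence $\tilde u_i \to \tilde u$ in $W^{1,p}(\R^n)$, and then invoke the classical characterization of $W_0^{1,p}(\Omega)$ as restrictions of $W^{1,p}(\R^n)$-functions vanishing a.e.\ outside $\Omega$, valid for Lipschitz (segment-property) domains. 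What your route buys is that it makes explicit where regularity of $\Omega$ enters; what it costs is precisely that extra hypothesis, which the paper's statement does not impose.

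Your claim that some hypothesis on $\Omega$ is unavoidable appears to be right, and is worth flagging. With the paper's definition of $d_H$ (computed on the open sets, not on complements or boundaries), the constant sequence $\Omega_i \equiv B$ does converge to $\Omega = B\setminus S$ for a relatively closed slit $S$ of positive $p$-capacity, the constant sequence $u_i \equiv u_0$ converges to $u_0|_\Omega$ in the zero-extension sense, and $u_0|_\Omega \notin W_0^{1,p}(\Omega)$ when $u_0 \equiv 1$ near $S$. The same example locates the unjustified step in the paper's proof: the cutoff error contains $\varepsilon^{-p}\int_{\{\dist(\cdot,S)<2\varepsilon\}} |u_i|^p$, which does not vanish because $u_i$ is only required to vanish near $\partial\Omega_i$, not near $\partial\Omega$, and Hausdorff convergence of the domains does not force Hausdorff convergence of the boundaries. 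Under an added assumption such as the uniform Lipschitz-graph condition used elsewhere in the paper (or your capacitary reformulation in general), your proof is complete.
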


\begin{proof}
By Lemma \ref{lem:smoothApprox}, and a density argument, we can again assume $u$ and the $u_i$ to be smooth.

For any $\varepsilon$ we can now pick a smooth cutoff-function $\psi_\varepsilon:\R^n \to [0,1]$ such that $\psi(x) = 0$ in an $\varepsilon$-neighborhood of $\partial \Omega$, and $\psi(x) = 1$ for all $x\in \Omega$ such that $\operatorname{dist}(x,\partial \Omega) > 2\varepsilon$. Additionally we can assume $|\nabla \psi_\varepsilon| \leq \frac{C}{\varepsilon}$. Then for all large enough $i$, we have that $\psi_\varepsilon u_i \in C_c(\Omega_i)$. Additionally we have $\psi_\varepsilon u_i \to \psi_\varepsilon u$ for $i\to \infty$ and $\|\psi_\varepsilon u_i - u_i\|_{W^{1,p}(\Omega_i)} \leq C(\varepsilon) \to 0$ for $\varepsilon \to 0$. The statement then follows from a diagonal sequence.
\end{proof}

Finally note that there is the following related approximation result:
\begin{thm}
 Let $(\Omega_i)_{i \in \N}$ be a sequence of domains converging to a bounded domain $\Omega$ in the Hausdorff-sense. Then for any $u \in W^{1,p}_0(\Omega)$ there exists a sequence $u_i \in W^{1,p}_0(\Omega_i)$ such that $u_i \to u$ in the zero-extension sense.
\end{thm}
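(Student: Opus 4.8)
The plan is to reduce everything to the observation recorded just before the statement: for functions in $W^{1,p}_0$ the zero extension commutes with differentiation, so $\cdot^\sim$ is an isometric embedding of $W^{1,p}_0(\Omega)$ into $W^{1,p}(\R^n)$, and likewise of each $W^{1,p}_0(\Omega_i)$. Consequently, for a sequence $u_i \in W^{1,p}_0(\Omega_i)$ we have $(\nabla u_i)^\sim = \nabla(u_i^\sim)$, and therefore $u_i \to u$ in the zero-extension sense is equivalent to ordinary convergence $u_i^\sim \to u^\sim$ in $W^{1,p}(\R^n)$. Thus the task becomes: given $u \in W^{1,p}_0(\Omega)$, produce $u_i \in W^{1,p}_0(\Omega_i)$ with $u_i^\sim \to u^\sim$ in $W^{1,p}(\R^n)$.

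First I would exploit the density of $C_c^\infty(\Omega)$ in $W^{1,p}_0(\Omega)$ to pick $\varphi_m \in C_c^\infty(\Omega)$ with $\|\varphi_m^\sim - u^\sim\|_{W^{1,p}(\R^n)} < 1/m$. The point of passing to smooth compactly supported functions is that each $\varphi_m$ has a fixed compact support $K_m := \supp \varphi_m \Subset \Omega$ with $\delta_m := \dist(K_m, \partial\Omega) > 0$. I would then show that each single $\varphi_m$ is admissible on all sufficiently late domains: choosing $\alpha_m < \delta_m$ we have $K_m \subset \Omega_{\alpha_m-}$, and by Proposition~\ref{prop:alpha+-} there is $N_m$ so that $\Omega_{\alpha_m-} \subset \Omega_i$ for all $i > N_m$. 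Hence $\supp\varphi_m \subset \Omega_i$ and $\varphi_m \in C_c^\infty(\Omega_i) \subset W^{1,p}_0(\Omega_i)$ for $i > N_m$, with the crucial feature that $(\varphi_m|_{\Omega_i})^\sim = \varphi_m^\sim$ is literally the same function, independent of $i$.

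The construction is completed by a diagonal argument. Arranging $N_1 < N_2 < \cdots$, I would set $u_i := 0$ for $i \leq N_1$ and $u_i := \varphi_m|_{\Omega_i}$ for $N_m < i \leq N_{m+1}$; each $u_i$ lies in $W^{1,p}_0(\Omega_i)$ by the previous step. Writing $m(i)$ for the corresponding index, we have $m(i) \to \infty$ as $i\to\infty$, so
\[
\|u_i^\sim - u^\sim\|_{W^{1,p}(\R^n)} = \|\varphi_{m(i)}^\sim - u^\sim\|_{W^{1,p}(\R^n)} < \tfrac{1}{m(i)} \to 0,
\]
which by the reduction of the first paragraph is exactly $u_i \to u$ in the zero-extension sense.

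The main obstacle, and really the only place where the hypothesis enters, is the geometric containment $\supp\varphi_m \subset \Omega_i$ for large $i$, i.e.\ that compact subsets of $\Omega$ are eventually swallowed by the $\Omega_i$. This is precisely the inner containment encoded in Proposition~\ref{prop:alpha+-}; everything else (density of smooth functions, the isometry of $\cdot^\sim$ on $W^{1,p}_0$, and diagonalization) is routine. It is worth stressing that no boundary regularity or Poincar\'e-type assumption is needed here, in contrast to the compactness results, since we never extend functions outward but only fit fixed interior bumps inside the approaching domains.
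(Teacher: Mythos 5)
Your proof is correct and follows essentially the same route as the paper's: approximate $u$ by $C_c^\infty(\Omega)$ functions, observe that each fixed compact support is eventually contained in $\Omega_i$ (via Hausdorff convergence), and extract a diagonal sequence. You merely spell out the reduction via the isometry of $\cdot^\sim$ on $W^{1,p}_0$ and the diagonalization more explicitly than the paper does.
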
 

\begin{proof}
 Per definition of $W^{1,p}_0(\Omega)$ there exists a sequence $(v_j)_j \in C_c^\infty(\Omega)$ such that $v_j\to u$ in $W^{1,p}(\Omega)$. But then for every $j$, we have $\operatorname{dist}(\supp v_j, \partial \Omega ) > 0$. Thus there exists an $i(j)$ such that $\supp v_j \subset \Omega_i$ for all $i\geq i(j)$, which means that $(v_j)^\sim \in W^{1,p}_0(\Omega_i)$. From these one can then pick the required approximating sequence.
\end{proof}

Note that even the boundedness of $\Omega$ could be relaxed by a localization argument.

\section{Examples}

\subsection{A shape optimization example} \label{subsec:shapeOpt}

\begin{figure}[h]
 \begin{tikzpicture}
  \draw[->] (0,0) -- (0,2.5);
  \draw[->] (0,0) -- (5,0);
  \draw (0,1.5) .. controls +(1,.3) and +(-1,-.3) .. (2,1.4) .. controls +(1,.3) and +(-1,-.3) .. (4,1.5);
  \node[above] at (2,1.4) {$r$};
  \draw[dashed] (0,0) ellipse (.5 and 1.5);
  \draw[dashed] (4,0) ellipse (.5 and 1.5);
  \draw[->] (.2,.5) -- (.7,.5);
  \draw[->] (4.2,.5) -- (4.7,.5);
  \node[right] at (0.7,.5) {$u_{\text{in}}$};
  \node[right] at (4.7,.5) {$u_{\text{out}}$};
  \draw[dashed] (0,-1.5) .. controls +(1,-.3) and +(-1,.3) .. (2,-1.4) .. controls +(1,-.3) and +(-1,.3) .. (4,-1.5);
  \node at (2,-.6) {$\Omega_r$};

  \draw (6,1.5) .. controls +(1,.3) and +(-1,-.3) .. (8,1.4) .. controls +(1,.3) and +(-1,-.3) .. (10,1.5);
  \draw (6,1.0) .. controls +(1,.2) and +(-1,-.1) .. (8,1.0) .. controls +(1,.1) and +(-1,-.3) .. (10,1.0);
  \draw[dashed] (6,1.5) -- (6,1);
  \draw[dashed] (10,1.5) -- (10,1);
  \node[right] at (10,1.5) {$h_i^+$};
  \node[right] at (10,1.0) {$h_i^-$};
  \node at (8.5,1.2) {$\Omega_i$};

  \draw (6,-.2) .. controls +(1,.3) and +(-.5,-.15) .. (8,-.1) .. controls +(.5,.15) and +(-.5,.05) .. (10,-.3);
  \draw (6,-.5) .. controls +(1,.2) and +(-1,-.3) .. (8,-.5) .. controls +(1,.3) and +(-.5,.05) .. (10,-.3);
  \draw[dashed] (6,-.2) -- (6,-.5);
  \node[right] at (10,-.3) {$h^+(x_0)=h^-(x_0)$};
  \node at (7.5,-.35) {$\Omega$};
 \end{tikzpicture}
 \caption{The domains in Subsections \ref{subsec:shapeOpt} and \ref{subsec:nonGraph}.}
\end{figure}
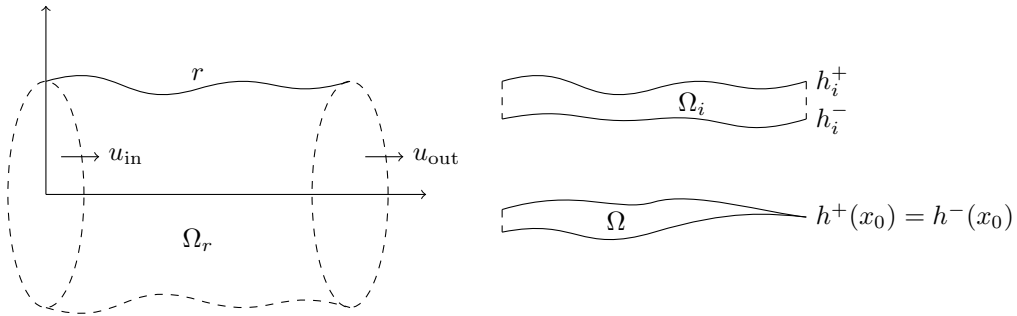

Problems involving changing domains often occur in the field of fluid-structure interaction where the equations of motions are solved on a domain whose shape is not fixed but in turn determined by the deformation of a solid whose determination is part of the problem. Consider the following toy-example\footnote{Note that this example is simple enough to be solved by more classical approaches as well. However it illustrates the possibilities of the framework.} inspired by results in \cite{muhaExistenceWeakSolution2013} and \cite{bucur2024free} (compare also \cite{bucur2005variational}):

For a regular enough function $r: [0,1] \to (0,\infty)$ with $r(0)$ and $r(1)$ fixed consider the rotational domain of radius given by $r$, i.e.\
\[ \Omega_r := \{(x_1,x_2,x_3) \in \R^3: x_1\in (0,1), x_2^2+x_3^2 < r(x_1)^2 \}. \]

On this domain one can consider a fluid flow problem for the stationary Stokes-equation
\begin{equation} \label{eqStokes}
 \left\{ \begin{aligned} \Delta u -\nabla p &= 0 &\text{ in }&\Omega_r \\ \nabla \cdot u &= 0 &\text{ in }& \Omega_r \\ u &= u_{\text{in}} &\text{ in }&\partial \Omega_r\cap \{x_1= 0\}\\ u &= u_{\text{out}} &\text{ in }&\partial \Omega_r\cap \{x_1= 1\} \\ u &= 0 &\text{ in }&\partial \Omega_r\cap \{x_1 \in (0,1)\} \end{aligned} \right.
\end{equation}

This is a simple model of the flow of an incompressible fluid through a pipe shaped domain for a given in- and outflow $u_{\text{in}}$ and $u_{\text{out}}$. As long as total in and outflow are the same, existence and uniqueness of weak solutions to this problem can be shown using standard methods.

A function $u \in W^{1,2}(\Omega_r)$ is weak solution to this PDE if it satisfies the given boundary conditions as well as
\begin{equation} \label{eqWeakStokes}
 \int_{\Omega_r} \nabla u : \nabla \varphi \,dx   = 0  \text{ for all } \varphi \in C_0^\infty(\Omega_r;\R^3), \nabla \cdot \varphi = 0.
\end{equation}

We now want to treat this as a problem in $r$. Assume we have a certain cost functional $\mathcal{P}(r)$ (e.g. a price of construction) and note that from physical considerations the fluid drag is given by $\mathcal{D_r}(u_r) := \int_{\Omega_r} |\nabla u_r|^2$ where $u_r$ is the unique solution to \eqref{eqWeakStokes}.

We can now ask all kind of optimization questions, e.g.\ what $r$ has the cheapest $\mathcal{P}(r)$ for a given maximal drag $\mathcal{D}(r) \leq d_0$, or the least drag for a given price, or we can even try to minimize an arbitrary combination of the two.

The classic procedure for showing existence of minimizers of such problems is the direct method. Start with a minimizing sequence $(r_i,u_i)_{i \in \N}$, use compactness in order to find a converging subsequence and a limit and then finally proof that the functionals are lower-semicontinuous in order to show that the limit is a minimizer. 

The interesting situation here is that the domain of $u_i$ itself depends on $r_i$. So while compactness of $r$ is standard (a typical assumption here would be e.g. that $\mathcal{P}$ is of such a form that any set $\{\mathcal{P}(r)<r_0\}$ embeds compactly into $C^{1,\alpha}$ for some $\alpha >0$), compactness of $u_i$ involves changing domains.

Assume that (after taking a subsequence) $r_i \to r_0$ in $C^{1,\alpha}((0,1))$. Then $\Omega_{r_i}$ converges to $\Omega_{r_0}$ in the Hausdorff-sense. In addition to that the domains $(\Omega_{r_i})_{i \in \N}$ are uniformly Lipschitz and thus by Corollary \ref{cor:LipschitzSobolevInequality} form a $(6,2)$-Poincar\'e sequence of domains. Then by Theorem \ref{BA-internal} there exists another subsequence such that finally $u_i \rightharpoonup u_0$ in the $W^{1,2}$-zero extension sense. A straightforward calculation shows that $u_i$ solves \eqref{eqWeakStokes} for $r=r_0$ and that $\mathcal{D}_{r_0}(u_0) \leq \liminf_{i\to \infty} \mathcal{D}_{r_i}(u_i)$. If $\mathcal{P}$ has a similar lower-semicontinuity, then $(r_0,u_0)$ will be the required minimizer.

While the equation in this case was linear, it is not hard to see that similar things can be done with a non-linear PDE, e.g.\ by including transport term $u_i \cdot \nabla u_i$ to obtain the steady Navier-Stokes equation. In this case, the definition of a weak solution looks like,
\begin{equation*}
 \int_{\Omega_r} \nabla u : \nabla \varphi + (u \otimes u) : \nabla \varphi \,dx  = 0  \text{ for all } \varphi \in C_0^\infty(\Omega_r;\R^3), \nabla \cdot \varphi = 0.
\end{equation*}
This additional term is nonlinear. However for convergence of the equation we can now rely on Theorem \ref{theorem:RK2} and proceed in the same way.

\subsection{\texorpdfstring{$\Gamma$}{Γ}-convergence}

A common strategy in the calculus of variations is to find minimizers to one problem by considering minimizers to a sequence of different, related problems instead. 
These problems are picked in such a way that proving the existence of a minimizer is easier, yet still the sequence of those minimizers then converges to a minimizer of the limit problem. For the latter, one often relies on a concept called $\Gamma$-convergence. Formally this can be summarized in a simple theorem. (See e.g.\ \cite{braides2002gamma} for a more complete introduction to the topic)

\begin{thm}
 Let $(X_i)_{i \in \N}$ be a sequence of topological spaces and assume that there is a notion of convergence $v_i \to v$ for $v_i \in X_i$ and $v$ in a limit space $X$. Consider a sequence of functionals $\mathcal{F}_i: X_i \to \R$ and a limit functional $\mathcal{F}: X \to \R$. Assume that all $\mathcal{F}_i$ have a minimizer $u_i \in X_i$ and that we have the following:
 \begin{enumerate}
  \item $\liminf$-inequality: For any sequence $v_i \in X_i$ with $v_i \to v \in X$, we have
  \[\mathcal{F}(v) \leq \liminf_{i\to \infty} \mathcal{F}_i(v_i)\]
  \item $\limsup$-inequality: For any $v \in X$, there exists a sequence $v_i \in X_i$ such that 
  \[\mathcal{F}(v) \geq \limsup_{i\to \infty} \mathcal{F}_i(v_i)\]
  \item Compactness: For any sequence $v_i \in X_i$ such that $\mathcal{F}_i(v_i)$ is bounded, there exists a converging subsequence such that $v_i \to v \in X$.
 \end{enumerate}
 Then $\mathcal{F}$ has a minimizer.
\end{thm}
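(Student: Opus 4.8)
The plan is to run the classical ``fundamental theorem of $\Gamma$-convergence'' argument, adapted to the setting of varying spaces. The strategy is: extract a convergent subsequence of the given minimizers $u_i$ using the compactness hypothesis, identify its limit $u$ as a candidate minimizer via the $\liminf$-inequality, and then certify optimality of $u$ by testing against an arbitrary competitor through its recovery sequence.

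First I would verify that the compactness hypothesis is applicable, i.e.\ that $\mathcal{F}_i(u_i)$ is bounded. Fixing any $v \in X$ (assuming $X \neq \emptyset$, as is implicit in the existence of a minimizer), the $\limsup$-inequality provides a recovery sequence $v_i \in X_i$ with $\limsup_{i\to\infty} \mathcal{F}_i(v_i) \leq \mathcal{F}(v) < \infty$. Since each $u_i$ minimizes $\mathcal{F}_i$, we have $\mathcal{F}_i(u_i) \leq \mathcal{F}_i(v_i)$, so $\limsup_{i\to\infty} \mathcal{F}_i(u_i) < \infty$ and the sequence is bounded. Hypothesis (3) then yields a subsequence (not relabeled) with $u_i \to u$ for some $u \in X$.

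Next I would apply the $\liminf$-inequality to this convergent subsequence to obtain $\mathcal{F}(u) \leq \liminf_{i\to\infty} \mathcal{F}_i(u_i)$. To show that $u$ is a minimizer, I would take an arbitrary $w \in X$, invoke the $\limsup$-inequality once more to produce a recovery sequence $w_i \in X_i$ with $\limsup_{i\to\infty} \mathcal{F}_i(w_i) \leq \mathcal{F}(w)$, and then chain the estimates: $\mathcal{F}(u) \leq \liminf_{i\to\infty} \mathcal{F}_i(u_i) \leq \liminf_{i\to\infty} \mathcal{F}_i(w_i) \leq \limsup_{i\to\infty} \mathcal{F}_i(w_i) \leq \mathcal{F}(w)$, where the middle step uses the term-by-term minimality $\mathcal{F}_i(u_i) \leq \mathcal{F}_i(w_i)$. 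As $w$ was arbitrary, $u$ minimizes $\mathcal{F}$.

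The argument is essentially a routine chase of inequalities, so there is no deep obstacle here; the only points requiring care are the finiteness bookkeeping needed to launch the compactness hypothesis (handled by the recovery sequence of a single fixed competitor) and the observation that passing to the subsequence does not destroy the minimality of the $u_i$, since that property is inherited term by term. I would emphasize that the \emph{same} minimality property is used twice---once to bound $\mathcal{F}_i(u_i)$ for compactness, and once to compare with the competitor's recovery sequence---which is what makes the two $\Gamma$-convergence inequalities interlock.
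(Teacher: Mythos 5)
Your argument is correct and is exactly the paper's (one-sentence) proof sketch carried out in full: compactness together with the $\liminf$-inequality produces a limit $u$ of the minimizers, and the $\limsup$-inequality applied to an arbitrary competitor's recovery sequence certifies that $u$ is a minimizer. The only cosmetic point is that your finiteness check yields an upper bound on $\mathcal{F}_i(u_i)$ rather than two-sided boundedness, which is all that hypothesis (3) can reasonably be meant to require at the level of informality at which the theorem is stated.
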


The proof consists of applying the direct method to show that the sequence of minimizers has a limit $u\in X$ and then using the $\limsup$-inequality to show that the result has to be the minimizer, as any improved competitor would result in a contradiction.

What interests us here is how well this type of argument fits our theory. Consider a sequence of domains $\Omega_i \to \Omega$ and functionals $\mathcal{F}_i: W^{k,p}(\Omega_i) \to \R$ with a limit $\mathcal{F}:W^{k,p}(\Omega) \to \R$. Then it is quite natural to consider some type of zero-extension convergence for the problem.

In particular the compactness theorems derived in Section \ref{sec:compactness} can be applied to provide condition (iii) in the preceding theorem. For deriving the $\liminf$ condition, the zero-extension can be of direct use, as it allows the use of known standard lower-semicontinuity results for fixed domains.

Finally the key step in the $\limsup$ inequality is the existence of a recovery sequence. In other words, we need to ask if any function in $W^{k,p}(\Omega)$ can be approximated by a matching sequence $v_i \in W^{k,p}(\Omega_i)$. This is for example the case if $\Omega$ is sufficiently regular enough to have an extension operator, as we then can take $v_i := \mathrm{Ext}(u)|_{\Omega_i}$ (compare also Theorem \ref{zeroToEconvergence}). 

\subsection{A non-graph case} \label{subsec:nonGraph}

While most of the previous results e.g.\ those for compactness focused on a local graph setting, it should be emphasized that the underlying notion of convergence is more general.

Consider e.g.\ a domain that is (locally) given as the area between two graphs
\begin{align*}
 \Omega_i := \{(x,x_n) \in Q \times \R: h^-_i(x) < x_n < h^+_i(x) \}
\end{align*}
for some fixed domain $Q \subset \R^{n-1}$ and sequences $(h^-_i)_i$, $(h^+_i)_i$ with respective limits $h^-$ and $h^+$. As long as $h^-(x) << h^+(x)$, this is essentially just the standard ``local graph''-situation.

However if there is a point $x_0 \in Q$ such that $h^-(x_0)=h^+(x_0)$, then the picture changes drastically. While the domain still has a nice description, it looses a lot of regularity. Already for $n=2$ and $h^-(x)<h^+(x)$ for all $x \neq x_0$, the resulting domain will potentially end up with a cusp and regularity of $h^-$ and $h^+$ cannot improve the situation. In fact, as soon as both are differentiable in $x_0$, the cusp becomes unavoidable. In higher dimensions this only gets worse. See e.g.\ \cite{kampschulteGlobalWeakSolutions2024} for a practical situation in which such a geometry occurs.

Despite all of those problems, it turns out that the framework presented here is able to deal with this situation with little to no issues. The only real danger is that of concentration, as locally the underlying volume gets arbitrarily small. However that problem is neatly dealt with by the fact that convergence in the zero-extension also implies convergence the norm.

It is only in the reverse that issues begin to appear. It is well known that for such cuspidal limit domains there is no longer any classic Sobolev-type inequality. Intuitively as the domain becomes thinner and thinner, it becomes harder to control function values through a larger number of their neighbors. This also has direct implications on the Rellich-Kondrachov type embedding theorems.
%
%

\subsection{Cases without compactness}

Finally let us note that we are even able to talk about convergence in cases that are normally excluded as they lack any sort of compactness. Consider $\Omega_i := (0,1) \cup (1+\frac{1}{i},2) \subset \R$. Then $\Omega_i$ converges to $\Omega = (0,2)$ in measure and there is a clear notion of convergence in the zero-extension sense. 

But there is a lack of Hausdorff-convergence as the middle interval $[1,1+\frac{1}{i}] \subset \R \setminus \Omega_i$ is nowhere near $\R \setminus \Omega$. As an interesting consequence of this, there is not even a Banach-Alaoglu type theorem. Clearly the sequence
\begin{align*}
 u_i : \Omega_i \to \R; x\mapsto \begin{cases} 1  & \text{ for } x \in (0,1) \\ 0 & \text{ for } x \in (1+\frac{1}{i},2) \end{cases}
\end{align*}
is uniformly bounded in any $W^{k,p}$. But any weak limit would have to correspond to the pointwise a.e.\ limit $\chi_{(0,1)}$, which is not in $W^{k,p}(\Omega)$ for any $k \geq 1$ and $p > 1$.

\section{Comparison with other notions of convergence}

In this section we will compare the Zero-Extension convergence to other notions of convergence that have been proposed and used in the literature previously. Before we can start comparing them, we will give a short overview over the definitions.

\subsection{Arbitrary Lagrangian-Eulerian (ALE)-maps}

Perhaps the most common approach to deal with changing domains is the so called ``Arbitrary Lagrangian-Eulerian'' (commonly shortened to ALE) approach (see for example \cite{alphonseAbstractFrameworkParabolic2015}). In this approach, all domains of a family $(\Omega_i)_{i\in I}$ under study\footnote{Here $I$ denotes a general set of indices, not necessarily $I=\N$, cmp.\ e.g.\ the example in subsection \ref{subsec:shapeOpt}, where it would be a class of functions $r$ with a certain regularity.} are identified with a single fixed reference domain $\Omega_0$ by a corresponding family of diffeomorphisms $(\Phi_i)_i$, with $\Phi_i: \Omega_0 \to \Omega_i$. Then instead of considering a function $u:\Omega_i \to \R^n$, one can always consider the pullback $u \circ \Phi_i : \Omega_0 \to \Omega_i$.

As long as the $\Phi_i$ are regular enough, this pullback gives an isomorphism between the vector spaces
 $W^{k,p}(\Omega_i)$ and $W^{k,p}(\Omega_0)$. Thus all questions of convergence can be reduced to considering the fixed space $W^{k,p}(\Omega_0)$.

\begin{defn}[ALE-convergence]
 Under the above assumptions we say that $u_i \in W^{k,p}(\Omega_i)$ ALE-converges to $u \in W^{k,p}(\Omega)$ if and only if
 $u_i \circ \Phi_i$ converges to $u$ in $W^{k,p}(\Omega)$.
\end{defn}

This method has several advantages in applications. For example, when studying a PDE on a varying domain, it allows to also pull the equation itself back to the reference set and solve it there, at the cost of additional $\Phi$-dependent coefficients. Similarly, it is a common method in the numerics of finite elements, where it is already common to relate the individual irregular cells of a given mesh to a regular reference polytope and thus different meshes with the same topology can easily be compared.

At the same time, these diffeomorphisms also form a big downside of the approach, as there is usually no useful canonical family of diffeomorphisms for any given problem.\footnote{In some sense, this is what the ``arbitrary'' in the name indicates. In many problems coming from continuum mechanics e.g. fluid flow, there is indeed a formal, canonical way to do so, by fixing as $\Omega_0$ the configuration at a certain time $t_0$ and then following the particles around, i.e. identifying each point $y$ in a later configuration $\Omega_t$ by considering the point in $\Omega_0$ where the particles which are now at $y$ originally started. This is commonly called the Lagrangian representation (compared to the Eulerian, which corresponds to what we are doing). However for practical problems there is generally no way of proving that this approach results in a well-defined family of diffeomorphisms. For example, for the Navier-Stokes equation in 3D, obtaining sufficient regularity would require solving the famous Millennium problem.} One has thus usually has to construct a new family of diffeomorphisms for every new problem. This is not always trivial, since convergence of domains has to result in a good enough convergence of the corresponding diffeomorphisms. This has to hold to the desired order of derivatives, as the derivatives of $\Phi$ all occur in those of $u\circ \Phi$ via the chain rule.

All of this makes this notion of convergence rather rigid. In particular, it cannot handle changes in topology, nor does it allow for much freedom of the boundary regularity, as $\partial \Omega_0$ and $\partial \Omega_i$ will always be related by a diffeomorphism as well.

It is thus not surprising that the resulting notion of convergence is only a special case of the one we study. However instead of proving this directly, we note that the ALE-approach is directly seen to be a special case of the so-called E-convergence.

\subsection{E-convergence}

The notion of E-convergence originally goes back to Vainikko \cite{vainikkoRegularConvergenceOperators1981} and has mainly been used to study convergence of eigenvalues and of operators. For example, see the recent articles \cite{FL22, F21}. Its main concept is that of a connecting system.

Assume that for a sequence of domains $(\Omega_i)_{i \in \mathbb{N}}$ and a sequence of corresponding Sobolev spaces $(W^{1,p}(\Omega_i))_{i \in \N}$ there exists a sequence of linear operators (called a \emph{connecting system})
$$
    E_i: W^{1,p}(\Omega) \to W^{1,p}(\Omega_i)
$$
such that
$$
    \|E_i u\|_{W^{1,p}(\Omega_i)} \to \|u\|_{W^{1,p}(\Omega)} \quad \text{for all } u \in W^{1,p}(\Omega).
$$

\begin{defn}
    The sequence $(u_i)_{i \in \N}$, $u_i \in W^{1,p}(\Omega_i)$, $E$-converges to $u \in W^{1,p}(\Omega)$ if 
    $$
        \|u_i - E_i u\|_{W^{1,p}(\Omega_i)} \to 0 \quad \text{for } i \to \infty.
    $$
\end{defn}

\begin{cor}[ALE implies E-convergence] \label{cor:ALEtoE}
 Let $\Phi_i: \Omega_0 \to \Omega_i$ be a family of $C^{k}$-diffeomorphisms which converges to $\Phi:\Omega_0 \to \Omega$ in $C^k$. We define the maps
 \begin{align*}
  E_i: W^{k,p}(\Omega;\R^n) \to W^{k,p}(\Omega_i; \R^n); u \mapsto u \circ \Phi \circ \Phi_i^{-1}
 \end{align*}
 Then for any sequence $(u_i)_{i \in \N}$ with $u_i \in W^{k,p}(\Omega_i;\R^n)$ and $u \in W^{k,p}(\Omega;\R^n)$ we have that $u_i \to u$ in the ALE sense (i.e.\ $u_i \circ \Phi_i \to u\circ \Phi$ in $W^{k,p}(\Omega_0;\R^n)$) if and only if $u_i \to u$ in the sense of E-convergence.
\end{cor}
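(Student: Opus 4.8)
The plan is to prove the equivalence by showing that under the stated hypotheses, the connecting operator $E_i$ defined by $u \mapsto u \circ \Phi \circ \Phi_i^{-1}$ is a legitimate connecting system, and then to relate the $E$-convergence defect $\|u_i - E_i u\|_{W^{k,p}(\Omega_i)}$ to the ALE-convergence defect $\|u_i \circ \Phi_i - u \circ \Phi\|_{W^{k,p}(\Omega_0)}$ by pulling everything back to the fixed reference domain $\Omega_0$ via the diffeomorphism $\Phi_i$. The key observation is that $\Phi_i$ is, for each fixed $i$, a $C^k$-diffeomorphism, so the pullback $v \mapsto v \circ \Phi_i$ is a bounded isomorphism $W^{k,p}(\Omega_i;\R^n) \to W^{k,p}(\Omega_0;\R^n)$, with a two-sided norm equivalence whose constants depend only on the $C^k$-norms of $\Phi_i$ and $\Phi_i^{-1}$.

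First I would record the change-of-variables estimate: for each $i$ there are constants $c_i, C_i > 0$ with
\begin{align*}
 c_i \|v\|_{W^{k,p}(\Omega_i)} \leq \|v \circ \Phi_i\|_{W^{k,p}(\Omega_0)} \leq C_i \|v\|_{W^{k,p}(\Omega_i)}
\end{align*}
for all $v \in W^{k,p}(\Omega_i;\R^n)$, obtained from the chain rule and the bounds on $D^\alpha \Phi_i$ together with a lower bound on $|\det D\Phi_i|$. Because $\Phi_i \to \Phi$ in $C^k$ and $\Phi$ is itself a fixed $C^k$-diffeomorphism onto $\Omega$, these constants can be taken uniformly bounded away from $0$ and $\infty$ for all large $i$; this uniformity is what makes the defects comparable rather than merely each going to zero for fixed $i$. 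Next I would verify that $E_i$ really is a connecting system, i.e.\ that $\|E_i u\|_{W^{k,p}(\Omega_i)} \to \|u\|_{W^{k,p}(\Omega)}$: since $E_i u \circ \Phi_i = u \circ \Phi$, pulling back gives $\|E_i u \circ \Phi_i\|_{W^{k,p}(\Omega_0)} = \|u \circ \Phi\|_{W^{k,p}(\Omega_0)}$, a fixed number, and the convergence of the norm equivalence constants forces $\|E_i u\|_{W^{k,p}(\Omega_i)} \to \|u\|_{W^{k,p}(\Omega)}$.

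The heart of the argument is the identity
\begin{align*}
 (u_i - E_i u) \circ \Phi_i = u_i \circ \Phi_i - u \circ \Phi,
\end{align*}
which holds because $E_i u \circ \Phi_i = u \circ \Phi \circ \Phi_i^{-1} \circ \Phi_i = u \circ \Phi$. Applying the uniform norm equivalence to the left-hand side shows that $\|u_i - E_i u\|_{W^{k,p}(\Omega_i)} \to 0$ if and only if $\|u_i \circ \Phi_i - u \circ \Phi\|_{W^{k,p}(\Omega_0)} \to 0$, which is precisely the statement that the two notions of convergence coincide.

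The main obstacle I anticipate is not the algebra of the pullback identity, which is immediate, but justifying the \emph{uniform} control of the change-of-variables constants as $i \to \infty$. One must check that $C^k$-convergence $\Phi_i \to \Phi$ (together with $\Phi$ being a diffeomorphism onto its image) really yields uniform upper bounds on all derivatives $D^\alpha \Phi_i$, $|\alpha| \leq k$, a uniform lower bound on $|\det D\Phi_i|$, and correspondingly uniform control of $\Phi_i^{-1}$ and its derivatives. The bounds on $\Phi_i$ follow directly from $C^k$-convergence, but controlling $\Phi_i^{-1}$ requires that $\inf_i \inf_{\Omega_0} |\det D\Phi_i| > 0$, which again follows from $C^k$-convergence to the diffeomorphism $\Phi$ and continuity of the determinant, though it deserves to be stated explicitly. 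Once this uniformity is in hand the equivalence is a clean two-line consequence of the displayed identity.
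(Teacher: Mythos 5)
Your proof is correct and takes essentially the same route as the paper's (much terser) argument: pulling everything back to $\Omega_0$ via the identity $(u_i - E_i u)\circ\Phi_i = u_i\circ\Phi_i - u\circ\Phi$ and controlling the change-of-variables factors, which the paper summarizes as ``a factor depending on $D\Phi\, D\Phi_i^{-1}$ which will converge to $1$.'' The uniformity of the norm-equivalence constants that you single out as the main point to check is precisely the detail the paper leaves implicit.
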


\begin{proof}
 The proof immediately follows from the fact that $ v \mapsto v\circ \Phi$ induces a linear isomorphism as well as a close look at the norms of the different functions which only differ by a factor depending on $D\Phi D\Phi_i^{-1}$ which will converge to $1$.
\end{proof}

\begin{rem}
 The converse does not hold. This is trivially clear from the fact that the maps $E_i$ in the E-convergence are rather arbitrary. But it also holds in the sense that there are meaningful situations where sequences $E$-converge using standard Sobolev-spaces, but there is no comparable ALE-result. 
 
 In particular, since $E$-convergence does not require the operators $E_i$ to be invertible, it is possible to employ it in the case of changing topology. For example take a sequence of annuli, $\Omega_i := B_1 \setminus \overline{B_{1/i}}$ and $E_i: W^{1,2}(B_1) \to W^{1,2}(\Omega_i)$ to be the restriction operators. Then what one would naturally call convergence of $u_i \in W^{1,2}(\Omega_i)$ to a limit $u\in W^{1,2}(B_1)$ is reasonably covered by the corresponding E-convergence. At the same time though, $\Omega_i$ and $B_1$ are of different topology and thus there can be no diffeomorphisms to a common reference domain.
 
 At the same time though, this strength of the E-convergence also results in a weakness. Due to the potential information loss of the operators and essentially taking the limit as a reference, it always has to be formulated with respect to a specific sequence of domains and all operators have to be constructed with that in mind. In contrast, ALE-convergence can be used in a context, where the limit-domain is not yet known or variable itself, e.g.\ when considering time-dependent domains.
\end{rem}

\subsection{Relations between the different notions and zero-extension convergence}

Since we established ALE-convergence as the weaker concept, we will now  study the connection between $E$-convergence and zero-extensions.

\begin{thm}[From zero-extension to $E$-convergence] \label{zeroToEconvergence}
  Let $(\Omega_i)_{i \in \N}$ be a sequence of bounded domains that converges to a domain $\Omega$ in the Hausdorff-sense and satisfies the uniform Lipschitz-graph condition. Let $\mathrm{Ext}: W^{k,p}(\Omega) \to W^{k,p}(\R^n)$ be an extension operator. We define
  \begin{align*}
   E_i: W^{k,p}(\Omega) \to W^{k,p}(\Omega_i); \quad u \mapsto \mathrm{Ext}(u) |_{\Omega_i}
  \end{align*}
  as the connecting system. Then a sequence with $u_i \in W^{k,p}(\Omega_i)$ converges to $u \in W^{k,p}(\Omega)$ in the sense of $E$-convergence if and only if it converges in the zero-extension sense.
\end{thm}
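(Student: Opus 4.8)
The plan is to prove the equivalence by showing that the quantity controlling $E$-convergence, namely $\|u_i - \mathrm{Ext}(u)|_{\Omega_i}\|_{W^{k,p}(\Omega_i)}$, and the quantity controlling zero-extension convergence, $\sum_{|\alpha|\leq k}\|(D^\alpha u_i)^\sim - (D^\alpha u)^\sim\|_{L^p(\R^n)}$, go to zero simultaneously. The bridge between the two is the fixed limit function $u \in W^{k,p}(\Omega)$ together with its fixed extension $\hat{u} := \mathrm{Ext}(u) \in W^{k,p}(\R^n)$, which does not depend on $i$. I would first set $w_i := \mathrm{Ext}(u)|_{\Omega_i} \in W^{k,p}(\Omega_i)$ and observe that it is precisely the difference between $u_i$ and $w_i$ that appears in $E$-convergence, so the task reduces to comparing $u_i$ against the \emph{single} function $\hat u$ in two slightly different ways.

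First I would prove the direction ``$E$-convergence $\Rightarrow$ zero-extension''. Assume $\|u_i - w_i\|_{W^{k,p}(\Omega_i)} \to 0$. For each multi-index $\alpha$ with $|\alpha|\leq k$, I would estimate
\begin{align*}
 \|(D^\alpha u_i)^\sim - (D^\alpha u)^\sim\|_{L^p(\R^n)} &\leq \|(D^\alpha u_i)^\sim - (D^\alpha w_i)^\sim\|_{L^p(\R^n)} + \|(D^\alpha w_i)^\sim - (D^\alpha u)^\sim\|_{L^p(\R^n)}.
\end{align*}
The first term equals $\|D^\alpha u_i - D^\alpha w_i\|_{L^p(\Omega_i)}$ by the isometry property of the zero-extension, and summing over $\alpha$ it is bounded by $\|u_i - w_i\|_{W^{k,p}(\Omega_i)} \to 0$. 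For the second term I would use that $D^\alpha w_i = (D^\alpha \hat u)|_{\Omega_i}$, so $(D^\alpha w_i)^\sim = \chi_{\Omega_i}\, D^\alpha \hat u$, while $(D^\alpha u)^\sim = \chi_\Omega\, D^\alpha \hat u$ (since $\hat u$ extends $u$). Hence the second term is $\|(\chi_{\Omega_i} - \chi_\Omega)\, D^\alpha \hat u\|_{L^p(\R^n)}$, and Hausdorff convergence together with Lemma~\ref{lemma:Minkovskii-dim} (applicable since the uniform Lipschitz-graph condition forces $\partial\Omega$ to have box-counting dimension less than $n$) gives $|\Omega_i \triangle \Omega| \to 0$; then dominated convergence, using the fixed integrable majorant $|D^\alpha \hat u|^p$, sends this term to zero.

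Second, for the reverse direction ``zero-extension $\Rightarrow$ $E$-convergence'', I would run essentially the same two-term splitting backwards:
\begin{align*}
 \|D^\alpha u_i - D^\alpha w_i\|_{L^p(\Omega_i)} &= \|(D^\alpha u_i)^\sim - (D^\alpha w_i)^\sim\|_{L^p(\R^n)} \\
 &\leq \|(D^\alpha u_i)^\sim - (D^\alpha u)^\sim\|_{L^p(\R^n)} + \|(\chi_\Omega - \chi_{\Omega_i})\, D^\alpha \hat u\|_{L^p(\R^n)}.
\end{align*}
The first term vanishes by the assumed zero-extension convergence and the second by the same symmetric-difference/dominated-convergence argument as above; summing over $|\alpha|\leq k$ yields $\|u_i - w_i\|_{W^{k,p}(\Omega_i)} \to 0$, which is exactly $E$-convergence.

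The main obstacle, and the only place where the hypotheses genuinely enter, is controlling the term $\|(\chi_{\Omega_i} - \chi_\Omega)\, D^\alpha \hat u\|_{L^p(\R^n)}$, i.e.\ showing that restricting the \emph{fixed} extension $\hat u$ to the moving domains is asymptotically harmless. This is where the uniform Lipschitz-graph condition is essential: it simultaneously guarantees that the extension operator $\mathrm{Ext}$ exists, and that $\partial\Omega$ is regular enough (box-counting dimension less than $n$) for $|\Omega_i \triangle \Omega| \to 0$ to hold under Hausdorff convergence, so that $\chi_{\Omega_i} \to \chi_\Omega$ in measure and the dominated convergence step closes. Everything else is the isometry of the zero-extension and the triangle inequality, so I expect the write-up to be short once this measure-convergence lemma is isolated.
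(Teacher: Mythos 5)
Your proof is correct, and it takes a slightly different (and arguably cleaner) route than the paper. The paper expands $\|u_i - E_i u\|_{L^p(\Omega_i)}^p$ into an exact identity: the zero-extension distance $\|\tilde u_i - \tilde u\|_{L^p(\R^n)}^p$ plus two error terms supported on $\Omega_i\setminus\Omega$ and $\Omega\setminus\Omega_i$; the term $\int_{\Omega_i\setminus\Omega}\bigl(|u_i-\mathrm{Ext}(u)|^p-|u_i|^p\bigr)$ still involves $u_i$, so the paper must invoke a pointwise inequality and an equiintegrability/no-concentration argument for $|u_i|^{p-1}$ to kill it. Your two triangle inequalities through the intermediate function $w_i=E_iu=\mathrm{Ext}(u)|_{\Omega_i}$ achieve the same reduction but push the entire domain-mismatch error onto the single fixed function $\hat u=\mathrm{Ext}(u)$: the term $\|(\chi_{\Omega_i}-\chi_\Omega)D^\alpha\hat u\|_{L^p(\R^n)}$ vanishes by absolute continuity of the integral of the fixed majorant $|D^\alpha\hat u|^p$ once $|\Omega_i\triangle\Omega|\to 0$, and no integrability control on the sequence $u_i$ near the boundary is needed in either direction. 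Your justification of $|\Omega_i\triangle\Omega|\to 0$ via Proposition \ref{prop:alpha+-} and Lemma \ref{lemma:Minkovskii-dim} (using that the uniform Lipschitz-graph condition makes $\partial\Omega$ of box-counting dimension $n-1$) is also more explicit than the paper's, which simply asserts that Hausdorff convergence implies $|\Omega\setminus\Omega_i|\to 0$. Both arguments rest on the same two pillars, the isometry of the zero-extension on the common domain $\Omega_i$ and the smallness of $\Omega_i\triangle\Omega$, so the approaches are close in spirit; yours trades the exact identity for a two-sided estimate and thereby avoids the one genuinely delicate step in the paper's version.
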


\begin{proof}
 Let us begin with the case $k =0$. By splitting the domain we get:
 \begin{align*}
  &\phantom{{}={}}\|u_i - E_i u\|_{L^p(\Omega_i)}^p = \int_{\Omega_i} |u_i - \mathrm{Ext}(u)|^p\\
  &= \int_{\R^n} | \tilde u_i - \tilde u|^p + \int_{\Omega_i \setminus \Omega} (|u_i - \mathrm{Ext}(u)|^p-|u_i|^p) - \int_{\Omega \setminus \Omega_i} |u|^p\\
  &=\|\tilde u_i - \tilde u\|_{L^p(\R^n)}^p +  \int_{\Omega_i \setminus \Omega} (|u_i - \mathrm{Ext}(u)|^p-|u_i|^p) - \int_{\Omega \setminus \Omega_i} |u|^p
 \end{align*}
 We thus only have to prove that these error-terms vanish in the limit. For the last term that follows directly from the fact that the convergence implies $|\Omega\setminus \Omega_i| \to 0$. For the second term we have by a standard identity
 \begin{align*}
  | |u_i - \mathrm{Ext}(u)|^p-|u_i|^p | \leq c(\mathrm{Ext}(u)) |u_i|^{p-1}
 \end{align*}
 where $c(\mathrm{Ext}(u)) \in L^p(\Omega)$. Then since $|u_i|^{p-1}$ is equiintegrable, there can be no concentrations. Thus the term vanishes as $|\Omega_i\setminus \Omega| \to 0$ as well.

For $k >0$, we simply repeat this calculation with $D^{k}u_i$ and $D^{k} \mathrm{Ext}(u)$ respectively.
\end{proof}

The converse also holds, however we have to be careful to restrict ourselves to ``reasonable'' connecting systems $E_i$, as the general definition of $E$-convergence does not require the $\Omega_i$ to be in any way related to the limit $\Omega$. We choose to enforce this connection here by requiring functions to be related on compact sets that are contained in all domains and have them avoiding concentrations, but there are other valid approaches.

\begin{thm}[From $E$-convergence to zero extension]
  Let $(\Omega_i)_{i \in \N}$ be a sequence of bounded domains that converges to a domain $\Omega$ in the Hausdorff-sense and satisfies the uniform Lipschitz-graph condition. Let $(E_i)_{i \in \N}$ be a connecting system such that for every domain $U$ which satisfies $U \subset \subset \Omega_i$ for all $i>i_0$ and for any $u \in W^{k,p}(\Omega)$, we have
  \begin{align} \label{EtoZeroRigidityCondition}
   \left\| u|_U - E_i(u)|_U \right\|_{W^{k,p}(U)} \to 0
  \end{align}
  and which is equiintegrable in the sense that for any $u \in W^{k,p}(\Omega)$ and any $\varepsilon > 0$ and $l\leq k$, there exists $\delta > 0$ such that for all $i \in \N$ and any $V\subset \Omega_i$ with $|V| < \delta$ we have $\int_V |D^l E_i(u)|^p < \varepsilon$.
  Then a sequence $(u_i)_{i \in \N}$ with $u_i \in W^{k,p}(\Omega_i)$ converges to $u \in W^{k,p}(\Omega)$ in the sense of E-convergence if and only if it converges in the zero-extension sense.
\end{thm}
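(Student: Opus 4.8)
The plan is to reduce the whole equivalence to a single consistency statement about the connecting system, namely that $E_i(u)$ itself converges to $u$ in the zero-extension sense:
\[
 C_i^\alpha := \bigl\| (D^\alpha E_i(u))^\sim - (D^\alpha u)^\sim \bigr\|_{L^p(\R^n)} \to 0 \quad \text{for all } |\alpha| \le k;
\]
call this property $(\star)$. The point is that $(\star)$ settles both implications at once. Indeed, since $D^\alpha u_i$ and $D^\alpha E_i(u)$ are genuine functions on the common domain $\Omega_i$, their difference is supported in $\Omega_i$ and $\|D^\alpha u_i - D^\alpha E_i(u)\|_{L^p(\Omega_i)} = \|(D^\alpha u_i)^\sim - (D^\alpha E_i(u))^\sim\|_{L^p(\R^n)}$. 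A single triangle inequality then gives
\[
 \Bigl| \, \|(D^\alpha u_i)^\sim - (D^\alpha E_i(u))^\sim\|_{L^p(\R^n)} - \|(D^\alpha u_i)^\sim - (D^\alpha u)^\sim\|_{L^p(\R^n)} \, \Bigr| \le C_i^\alpha .
\]
Summing over $|\alpha|\le k$ and using that $\|\cdot\|_{W^{k,p}(\Omega_i)}$ is equivalent to $\sum_{|\alpha|\le k}\|D^\alpha \cdot\|_{L^p(\Omega_i)}$, the $E$-convergence quantity $\|u_i - E_i u\|_{W^{k,p}(\Omega_i)}$ and the zero-extension quantity $\sum_{|\alpha|\le k}\|(D^\alpha u_i)^\sim - (D^\alpha u)^\sim\|_{L^p(\R^n)}$ then tend to zero together, which is exactly the claimed equivalence.

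It therefore remains to establish $(\star)$, and here I would argue by splitting $\R^n$ into a stable interior and a thin collar. Fixing $|\alpha|\le k$ and $\varepsilon>0$, the equiintegrability hypothesis for $E_i(u)$ supplies a $\delta>0$ (uniform in $i$) with $\int_V |D^\alpha E_i(u)|^p < \varepsilon^p$ whenever $|V|<\delta$; shrinking $\delta$ if necessary, absolute continuity of the integral gives the same bound for $\int_V |D^\alpha u|^p$. Since the uniform Lipschitz-graph condition forces $\partial\Omega$ to have box-counting dimension below $n$, Lemma \ref{lemma:Minkovskii-dim} yields a scale $\beta>0$ for which the collar $L_\beta := \Omega_{\beta+}\setminus\Omega_{\beta-}$ satisfies $|L_\beta|<\delta$. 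By Proposition \ref{prop:alpha+-} (applied at scales $\beta$ and $\beta/2$), for all large $i$ the eroded set $U:=\Omega_{\beta-}$ is compactly contained in $\Omega_i$, while both $\Omega_i\setminus\Omega$ and $\Omega\setminus\Omega_i$ lie in $L_\beta$.

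On $U$ both $D^\alpha E_i(u)$ and $D^\alpha u$ are defined, so the integrand is honestly $|D^\alpha u - D^\alpha E_i(u)|^p$, and the rigidity hypothesis \eqref{EtoZeroRigidityCondition} makes $\|D^\alpha u - D^\alpha E_i(u)\|_{L^p(U)}\to 0$ as $i\to\infty$. Off $U$ the two zero-extensions are supported in $L_\beta$, so the elementary bound $|a-b|^p \le 2^{p-1}(|a|^p+|b|^p)$ together with the two integrability estimates above controls that part by $2^{p-1}(\varepsilon^p+\varepsilon^p)$. Letting $i\to\infty$ and then $\varepsilon\to 0$ proves $(\star)$.

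I expect the collar term to be the only real obstacle. A naive comparison of the two convergences directly would instead leave a term $\int_{\Omega_i\setminus\Omega}|D^\alpha u_i|^p$ involving the sequence $u_i$ on the vanishing collar, over which no a priori control is available; routing everything through $E_i(u)$ via $(\star)$ is precisely what replaces this by $\int_{L_\beta}|D^\alpha E_i(u)|^p$, where the equiintegrability hypothesis applies uniformly in $i$. The delicate point is the order of quantifiers, that is, first choosing $\beta$ (hence $\delta$) from equiintegrability and the Minkowski-dimension estimate, and only afterwards sending $i\to\infty$, and it is exactly there that both hypotheses on the connecting system are consumed.
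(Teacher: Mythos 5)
Your proof is correct, and it reorganizes the argument in a way that differs from the paper's. The paper compares the two convergence quantities directly: it fixes $U \subset\subset \Omega$, splits both $\|u_i - E_i u\|_{L^p(\Omega_i)}^p$ and $\|\tilde u_i - \tilde u\|_{L^p(\R^n)}^p$ into a part over $U$ (where the rigidity condition \eqref{EtoZeroRigidityCondition} makes the two main terms agree in the limit) and error terms over $\Omega_i \setminus U$, and then treats the two directions of the equivalence separately — in particular, the direction from $E$-convergence to zero-extension convergence is handled by a small contradiction argument to control $\int_{\Omega_i\setminus U}|u_i|^p$. You instead isolate the statement $(\star)$ that the connecting system itself satisfies $E_i(u) \to u$ in the zero-extension sense, prove it by exactly the same two ingredients (rigidity on the eroded set, equiintegrability plus absolute continuity on the collar), and then obtain both directions at once from the identity $\|D^\alpha u_i - D^\alpha E_i(u)\|_{L^p(\Omega_i)} = \|(D^\alpha u_i)^\sim - (D^\alpha E_i(u))^\sim\|_{L^p(\R^n)}$ and one reverse triangle inequality. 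What this buys is symmetry (no case analysis, no contradiction argument, and the troublesome term $\int_{\Omega_i\setminus U}|D^\alpha u_i|^p$ never appears), plus a reusable intermediate fact of independent interest; it also makes explicit, via Lemma \ref{lemma:Minkovskii-dim} and Proposition \ref{prop:alpha+-}, why $|\Omega_i \setminus U|$ can be made small uniformly in $i$, a point the paper's proof leaves implicit. The only cosmetic caveats are that $\Omega_{\beta-}$ need not be connected (so "domain" in the hypothesis should be read loosely, as the paper itself does), and that the claim that the uniform Lipschitz-graph condition gives $\partial\Omega$ box-counting dimension below $n$ is exactly the remark the paper makes after Lemma \ref{lemma:Minkovskii-dim}; neither affects the validity of your argument.
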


\begin{proof}
 Fix $U \subset \subset \Omega$. Then by the definition of Hausdorff-convergence it also satisfies $U \subset \subset \Omega_i$ for all $i$ large enough. As in the proof of Theorem \ref{zeroToEconvergence}, it is enough to consider the case $k=0$ and then rewrite this proof with the corresponding derivatives. We now consider 
 \begin{align*}
  \|u_i - E_i u\|_{L^p(\Omega_i)}^p  =\|u_i  - E_i u\|_{L^p(U)}^p +  \int_{\Omega_i \setminus U} |u_i-E_i u|^p 
 \end{align*}
 and
 \begin{align*}
  \|\tilde u_i - \tilde u\|_{L^p(\R^n)}^p = \|u_i - u\|_{L^p(U)}^p +  \int_{\Omega \setminus U} |\tilde u_i-\tilde u|^p. 
 \end{align*}
 By \eqref{EtoZeroRigidityCondition}, the two first terms on the right hand side converge to the same limit. What is left, is to deal with the two error terms. 
 
 Assume that $u_i$ converges to $u$ in the zero-extension sense. Then the second error term vanishes trivially. For the first term, using the equiintegrability, we can pick $\Omega_i \setminus U$ small enough such that $\int_{\Omega_i \setminus U} |E_i u |^p < \varepsilon$ and the same is true for $\int_{\Omega_i \setminus U} |u_i |^p$ by the fact that $\tilde{u}_i$ converges in $L^p(\R^n)$.
 
 Conversely, if we assume that $u_i$ $E$-converges to $u$, then the first vanishes trivially. For the second we can proceed similarly, and choose $\Omega_i \setminus U$ small enough such that $\int_{\Omega_i \setminus U} |u |^p < \varepsilon$. Now assume $\delta$ and $\varepsilon$ as in the equiintegrability-condition and let $|\Omega_i \setminus U|< \delta$. If there is a (sub)sequence of $i$ such that $\int_{\Omega_i \setminus U} |u_i |^p > 2\varepsilon$, then $\int_{\Omega_i \setminus U} |u_i - E_i u |^p > \varepsilon$ by the equiintegrability, which is a contradiction to $E$-convergence.
\end{proof}

As a direct consequence, we also get the following:

\begin{cor}[Zero-extension implies ALE-convergence]
  Let $\Phi_i: \Omega_0 \to \Omega_i$ be a family of $C^{k}$-diffeomorphisms which converges to $\Phi:\Omega_0 \to \Omega$ in $C^k$. Then a sequence $(u_i)_{i \in \N}$ with $u_i \in W^{k,p}(\Omega_i)$ converges to $u \in W^{k,p}(\Omega)$ in the zero-extension sense if and only it converges in the corresponding ALE-sense.
\end{cor}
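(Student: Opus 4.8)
The plan is to obtain the result purely by chaining together the equivalences already established in this section, so that no new analysis of the zero-extension itself is needed. By Corollary \ref{cor:ALEtoE}, ALE-convergence of $(u_i)$ to $u$ is precisely $E$-convergence with respect to the specific connecting system $E_i : u \mapsto u \circ \Phi \circ \Phi_i^{-1}$. Hence it suffices to show that, \emph{for this particular connecting system}, $E$-convergence is equivalent to zero-extension convergence, and for that I would invoke the theorem ``From $E$-convergence to zero extension''. The entire task therefore reduces to verifying that the ALE connecting system meets the hypotheses of that theorem; the vector-valued case is handled componentwise.

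First I would record the elementary consequences of the $C^k$-convergence $\Phi_i \to \Phi$ (with $k \geq 1$). Since $\Phi$ is a diffeomorphism, $\det D\Phi$ is bounded away from zero on compacts, and by $C^1$-convergence so is $\det D\Phi_i$ for large $i$; the inverse function theorem then yields $\Phi_i^{-1} \to \Phi^{-1}$ in $C^k$ on compact sets, and consequently $\Psi_i := \Phi \circ \Phi_i^{-1} \to \mathrm{id}$ in $C^k$ with uniformly bounded derivatives and Jacobians bounded above and below. Moreover, assuming $\partial \Omega_0$ is Lipschitz (implicit in the $C^k$-diffeomorphism picture), the images $\Omega_i = \Phi_i(\Omega_0)$ inherit the uniform Lipschitz-graph condition, because the local graph representations and their Lipschitz constants depend continuously on the uniformly $C^1$-bounded maps $\Phi_i$. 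This supplies the standing geometric hypothesis of the theorem.

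Next I would verify the two analytic conditions for $E_i$. For the rigidity condition \eqref{EtoZeroRigidityCondition}, fix $U \subset\subset \Omega_i$ for all large $i$; there $E_i(u) = u \circ \Psi_i$ with $\Psi_i \to \mathrm{id}$ in $C^k$, so continuity of the superposition operator gives $u \circ \Psi_i \to u$ in $W^{k,p}(U)$. For equiintegrability, I would expand $D^l E_i(u)$ by the higher-order chain rule (Fa\`a di Bruno's formula) as a finite sum of terms $\bigl((D^m u)\circ \Psi_i\bigr) P_i$, where the prefactors $P_i$, being polynomials in the derivatives $D^j \Psi_i$, are uniformly bounded in $L^\infty$. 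A change of variables $y = \Psi_i(x)$ with Jacobian bounded above and below then transfers the equiintegrability of the fixed functions $D^m u \in L^p(\Omega)$ to uniform equiintegrability of $D^l E_i(u)$: for $V \subset \Omega_i$ with $|V|$ small, $\int_V |D^l E_i(u)|^p \lesssim \sum_m \int_{\Psi_i(V)} |D^m u|^p$, and $|\Psi_i(V)| \lesssim |V|$ can be made as small as desired uniformly in $i$.

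The main obstacle is the rigidity step, i.e.\ the continuity of the superposition operator $v \mapsto v \circ \Psi_i$ on $W^{k,p}$ when $v$ is merely Sobolev and not continuous: for smooth $v$ the claim is immediate from the chain rule and the uniform convergence of $\Psi_i$ and its derivatives, but passing to general $v$ requires the uniform-in-$i$ operator bounds (from the $C^1$-bounds on $\Psi_i$ and $\Psi_i^{-1}$) so that the density argument is stable along the whole sequence. Once these hypotheses are confirmed, the theorem ``From $E$-convergence to zero extension'' gives the equivalence of $E$-convergence and zero-extension convergence for the ALE connecting system, and combining this with Corollary \ref{cor:ALEtoE} completes the chain ALE $\Leftrightarrow$ $E$-convergence $\Leftrightarrow$ zero-extension convergence.
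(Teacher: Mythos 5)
Your proposal follows exactly the paper's own route: the paper's proof likewise observes that the connecting system $E_i(u) = u \circ \Phi \circ \Phi_i^{-1}$ from Corollary \ref{cor:ALEtoE} satisfies the hypotheses of the theorem ``From $E$-convergence to zero extension,'' so that all three notions coincide. Your write-up is in fact more detailed than the paper's, since you actually verify the rigidity and equiintegrability conditions that the paper only asserts; those verifications look sound.
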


\begin{proof}
 It is possible to prove this directly. However we note that the connecting system constructed in Corollary \ref{cor:ALEtoE}, also satisfies the assumptions of the previous theorem, proving that in this case all notions coincide.
\end{proof}

\section{Conclusion and outlook}

We have introduced the concept of strong and weak zero-extension convergence for functions on changing domains and shown its usefulness and versatility. In particular we have seen that there are quite general compactness properties which are of direct importance for applications.

The intent however was to mainly give a concise, self-contained introduction. There are several adjacent topics we omitted in favor of brevity and are planning to elaborate on in future works instead. These include in particular the following:

\begin{itemize}
 \item \textbf{Time dependent domains:} For many applications, such as fluid-structure interaction, the domains depend on a time-parameter and are determined by the problem itself. As a result one needs to deal with sequences of time-dependent families of domains and their convergence. While the basic ideas are similar to those discussed here, the resulting time-dependent spaces will have a Bochner-type structure, whose topology will be of independent interest. 
 \item \textbf{The case $p=1$:} The case $p=1$ is of separate special interest. Not only does it lack reflexiveness, but it also directly connected to the space $BV$ of functions of bounded variation, which in turn is natural for zero-extension, as it can deal with the resulting jumps directly.
 \item \textbf{Boundary values for less regular domains:} So far we have only given a very brief treatment of boundary values in the case of sufficiently ``nice'', parametrized boundaries. Yet by simple completion of the smooth functions, one can define a notion of trace-space for much more irregular boundaries. The convergence of these spaces under changing domains then poses another natural avenue of studies.
\end{itemize}

\bibliographystyle{alpha} 
\bibliography{Changing_Domains}

\end{document}